\pgfplotsset{compat=1.15}
\definecolor{myblue}{RGB}{0,128,128}
\newcommand{\R}{\mathbb{R}}
\newcommand{\N}{\mathbb{N}}
\newcommand{\probability}{\mathbb{P}}
\def\defeq{\mathrel{\mathop:}=}
\newtheorem{thm}{Theorem}
\newtheorem{dfn}{Definition}
\let\mytagform@=\tagform@
\def\tagform@#1{\maketag@@@{\color{myblue}(#1)}}
\newif\ifnoindentafter
\newif\iffinalfonts
\begin{document}

\title{Predictability and Fairness in Load Aggregation \\ and Operations of Virtual Power Plants}
 \author{
 J. Mare\v{c}ek, 
 M. Roubal{\' i}k, 
 R. Ghosh,
 R. Shorten,  
 F. Wirth
\thanks{J. Marecek and M. Roubalik are at the Czech Technical University, Prague, the Czech Republic.}
\thanks{R. Ghosh and R. Shorten were at University College Dublin, Dublin, Ireland.}
\thanks{R. Shorten is at Imperial College London, South Kensington, UK.}
\thanks{F. Wirth is at University of Passau, Germany.}
\thanks{Manuscript received \today.}
}
\maketitle
\begin{abstract}
In power systems, one wishes to regulate the aggregate demand of an ensemble of distributed energy resources (DERs), such as controllable loads and battery energy storage systems. 
We suggest a notion of predictability and fairness, which suggests that the long-term averages of prices or incentives offered should be independent of the initial states of the operators of the DER, the aggregator, and the power grid.  
We show that this notion cannot be guaranteed with many traditional controllers used by the load aggregator, including the usual proportional-integral (PI) controller. 
We show that even considering the non-linearity of the alternating-current model, this notion of predictability and fairness can be guaranteed for incrementally input-to-state stable (iISS) controllers, under mild assumptions. 
\end{abstract}
\begin{IEEEkeywords}
Demand-side management, Load management, Power demand, Power systems, Power system analysis computing.
\end{IEEEkeywords}

\section{Introduction}
In response to large-scale integration of intermittent sources of energy \cite{lund2015review} in power systems, 
system operators seek novel approaches to balancing the load and generation.  
The aggregation of  distributed energy resources (DERs) and regulation of their aggregate power output has emerged as a viable balancing mechanism \citep[e.g.]{SG2010, Hiskens2011,pinson2014benefits} for many system operators.\footnote{
In Australia, for instance, it is estimated that there are  2.5 million rooftop solar installations with a total capacity of more than 10 GW and 73,000 home energy storage systems with a 1.1 GW capacity.}

For the transmission system operator, the load aggregator provides a single point of contact, which is responsible for the management of risks involved in the coordination of an ensemble of DERs and loads, and the legal and accounting aspects associated with modern grid codes and contracts governing the provision of ancillary services to the grid.

The load aggregator then coordinates an ensemble of participants, which can take many forms, including DERs \citep[e.g.]{SG2010a}, interruptible or deferrable loads \citep[e.g.]{Alagoz2013,mathieu2014arbitraging}, and eventually, vehicle-to-grid (V2G) services \citep[e.g.]{EV2018,lee2020adaptive}. 
Load aggregation is sometimes also known as a virtual power plant \cite{el2010virtual,crisostomi2014plug,8013070}.

Independent of the form of the ensemble, load aggregation introduces a novel feedback loop \cite{6197252,li2020real} into regulation problems in power systems.
Recent regulation\footnote{In the European Union, Regulation (EU) 2019/943 of the European Parliament and of the Council of 5 June 2019 on the internal market for electricity sets ``fundamental principles [to] facilitate aggregation of distributed demand and supply'', while requiring ``non-discriminatory market access'' and ``fair rules''. Its implementation in individual member states varies and is still in progress in many member states since mid-2021.} mandates the use of load aggregation, and hence makes the closed-loop analysis of load aggregation very relevant.
A recent pioneering study of the closed-loop aspects of load aggregation by Li et al. \cite{li2020real} leaves three issues open:
how to go beyond a linearisation of the physics of the alternating-current (AC) model?
How to model the uncertainty inherent in response to the control signal provided by the load aggregator?
Finally, what criteria of a good-enough behaviour of such a closed-loop system to consider?
The first issue is particularly challenging: While the consideration of losses in the AC model may be necessary \cite{baker2021solutions} for the sake of the accuracy of the model, it introduces a wide range of challenging behaviours \citep{754100,rogers2012power} in the non-linear dynamics. 

In this paper, we address all three issues in parallel. First, we suggest that good-enough behaviour involves ``predictability and fairness'' properties that are rarely considered in control theory and require new analytical tools. 
These include:
\begin{itemize}
\item[(a)] whether each DER and load, on average, receives similar treatment in terms of load reductions, disconnections, or power quality, 
\item[(b)] whether the prices or incentives depend on initial conditions, such as consumption at some point in the past,
\item[(c)] whether the prices, incentives, or limitations of service are stable quantities, preferably not sensitive to noise entering the system,
and predictable.
\end{itemize}
Building upon the recent work of \cite{ErgodicControlAutomatica}, we show that these criteria can be phrased in terms of properties of a specific stochastic model. In particular, these criteria are achieved whenever one can guarantee the existence of the {\em unique invariant measure} of a stochastic model of the closed-loop system. Thus, the design of feedback systems for deployment in the demand-response application should consider both the traditional notions of regulation and optimality and provide guarantees concerning the existence of the unique invariant measure.

Next, we show that many familiar control strategies, in straightforward situations, do not necessarily give rise to feedback systems that possess the unique stationary measure.  
In theory, we use the model of \cite{ErgodicControlAutomatica} to show that 
 with any controller ${\mathcal C}$ that is  linear marginally stable with a pole $s_1 =e^{q j \pi}$ on the unit circle, where $q$ is a rational number, the closed-loop system cannot have a unique invariant measure.
In practice, we illustrate this behaviour in simulations using Matpower \cite{zimmerman2010matpower}.

As our main result, we show that one can design controllers that allow for predictability and fairness from the point of view of a participant, such as an operator of DERs, even in the presence of losses in the AC model. These results are based on a long work history on incremental input-to-state stability, which we have extended to a stochastic setting.
\begin{figure}[t!]
\includegraphics[width=\columnwidth]{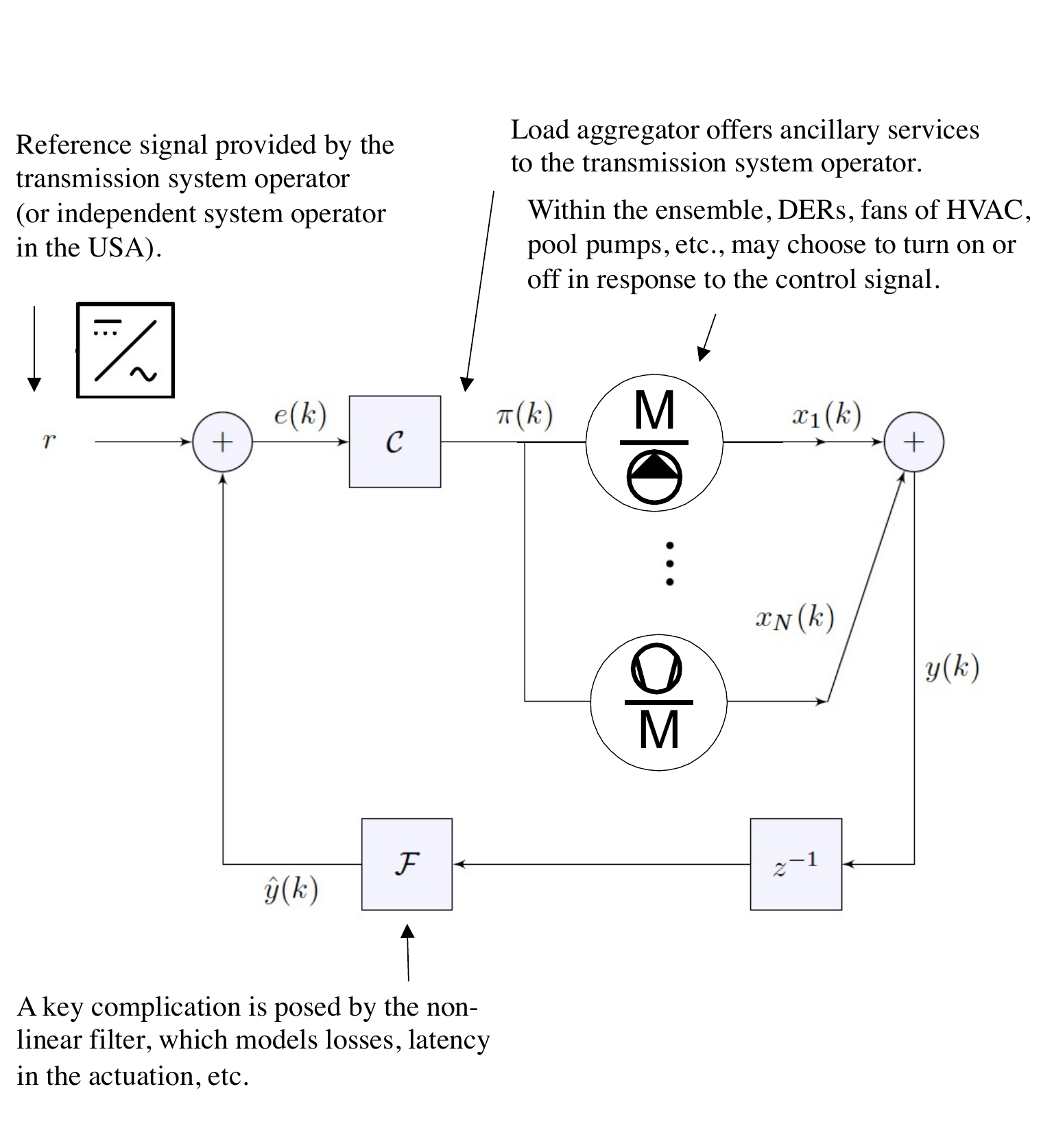}
\caption{An illustration of our closed-loop model, following \cite{ErgodicControlAutomatica}.}
\label{system}
\end{figure}

\section{Related Work}
There is a rich history of work on dynamics and control in power systems, summarised in several excellent textbooks of \cite{kundur1994power,sauer1998power,machowski2011power,rogers2012power}, and \cite{bevrani2014robust}. Without aiming to present a comprehensive overview, one should like to point out that traditionally, load frequency control (LFC) is performed using controllers with poles on the unit circle, such as proportional-integral (PI) controllers \citep[e.g.]{4074993,32469}.
There are many alternative proposals, some of them excellent \cite[e.g.]{Ugrinovskii2005,Perfumo2012,zarate2010opf,li2016sqp,wang2018sdp,li2019sequential,nojavan2020voltage,nikkhah2020stochastic,9511198}, but even the most recent textbook of \cite[Chapter 4, ``Robust PI-Based Frequency Control'']{bevrani2014robust} suggests that ``complex state feedback or higher-order dynamic controllers [...] are impractical for [the] industry''.

Although the idea of demand response has been studied for several decades \citep[e.g.]{4112061}, only recently it has been shown to have commercial potential \citep{4275305,5608535,6558529, Biegel2014}. Notably, with a capacity as low as 20--70 kWh, load aggregators may break even \cite{Biegel2014} in the spot market in developed economies, although many grid codes still require load aggregators to aggregate a substantially more enormous amount of active power (e.g., 1 MW).  

Correspondingly, there has been much interest in incorporating  load aggregation in load frequency control \citep{5764847,6112198,6239581,6224203,6315670,6397579,6508915,6509996,6702462,6717056,6874591,6839081,dorfler2017gather,7004818,7065326,7944568}.
We refer to \cite{Dehghanpour2015} for a recent survey.

In a recent pioneering study, Li et al. \cite{li2020real} considered the closed-loop system design while quantifying the available flexibility from the aggregator to the system operator,
albeit without the non-linearity of the alternating-current model.

While some of the more recent approaches utilize model-predictive control \cite[e.g.]{7065326}, many of the early proposals extend the use of PI. In contrast, we utilise {\em iterated function systems} \citep{elton1987ergodic,BarnsleyDemkoEltonEtAl1988,barnsley1989recurrent}, a class of Markov processes, to design novel controllers following \cite{ErgodicControlAutomatica}.

\section{Notation, Preliminaries and Our Model}\label{sec:model}
Let us introduce some definitions and notation:
\begin{dfn}\label{df:class-K}
A function $\gamma : \mathbb R^{+}\to \mathbb R^{+}$ is is said to be of class $\mathcal K$ if it is continuous, increasing and $\gamma(0)=0$. It is of class $\mathcal K_{\infty}$ if, in addition, it is proper, i.e., unbounded.
\end{dfn}
\begin{dfn}\label{df:class-KL}
A continuous function $\beta: \mathbb R^{+}\times \mathbb R^{+}\to \mathbb R^{+}$ is said to be of class $\mathcal K \mathcal L$, if for all fixed $t$ the function $\beta(\cdot, t)$ is of class $\mathcal K$ and for all fixed $s$, the function $\beta(s, \cdot)$ is is non-increasing
and tends to zero as $t\to \infty$.
\end{dfn}
In keeping with \cite{ErgodicControlAutomatica}, our model is based on a feedback signal $\pi(k)\in \Pi \subseteq \mathbb{R}$ sent to $N$ agents.
Each agent $i$ has a state $x_i\in \R^{n_i}$ and an associated output $y_i \in\mathbb{D}_i$, where the latter is a finite set.
The non-de\-ter\-min\-is\-tic response of agent $i$ is within a finite set of actions:
$\mathbb{A}_i =\{
a_1,\ldots,a_{L_i} \}\subset \R^{n_i}$.
The finite set of possible resource demands of agent $i$ is
$\mathbb{D}_i$:
\begin{equation}
\label{eq:5}
\mathbb{D}_i := \{ d_{i,1},
d_{i,2}, \ldots, d_{i,m_i}\}.
\end{equation}
We assume there are $w_i \in \N$ state transition maps ${\mathcal W}_{ij}: \R^{n_i} \to \R^{n_i}$, $j=1,\ldots,w_i$
for agent $i$
and $h_i \in \N$ output maps ${\mathcal H}_{i\ell}: \R^{n_i} \to
\mathbb{D}_i$, $\ell= 1,\ldots,h_i$ for each agent $i$.
The evolution of the states and
the corresponding demands then satisfy:
\begin{align}\label{eq:general}
x_i(k+1) & \in  \{  {\mathcal W}_{ij}(x_i(k)) \;\vert\; j = 1, \ldots, w_i\}, \\
y_i(k) & \in  \{ {\mathcal H}_{i\ell}(x_i(k)) \;\vert\; \ell = 1, \ldots, h_i\},
\end{align}
where the choice of agent $i$'s response at time $k$ is governed by probability
functions $p_{ij} : \Pi \to [0,1]$, $j=1,\ldots,w_i$, respectively
$p'_{i\ell} : \Pi \to [0,1]$, $\ell=1,\ldots,h_i$. Specifically, for each agent $i$, we have
for all $k\in\N$ that
\begin{subequations} \label{eq:problaws}
\begin{align}\label{eq:problaw-1}
&\mathbb{P}\big( x_i(k+1) = {\mathcal W}_{ij}(x_i(k)) \big) = p_{ij}(\pi(k)),\\
&\mathbb{P}\big( y_i(k) = {\mathcal H}_{i\ell}(x_i(k)) \big) = p'_{i\ell}(\pi(k)).
\intertext{Additionally, for all $\pi \in \Pi$, $i=1,\ldots,N$ it holds that}
&\sum_{j=1}^{w_i} p_{ij}(\pi) = \sum_{\ell=1}^{h_i} p'_{i\ell}(\pi) = 1.
\end{align}
\end{subequations}
The final equality comes from the fact $p_{ij}$, $p'_{i\ell}$ are
probability functions. We assume that, conditioned on $\{ x_i(k)  \}, \pi(k)$,
the random variables $\{ x_i(k+1) \mid  i = 1,\ldots,N \}$ are stochastically independent. The outputs $y_i(k)$ each depend on $x_i(k)$ and the signal $\pi(k)$ only.

\label{sec:markov}
Let $\Sigma$  be a closed subset  of $\R^n$ with the usual Borel
$\sigma$-algebra $\mathbb{B}(\Sigma)$. We call the elements of $\mathcal{B}(\Sigma)$
events. A Markov chain on $\Sigma$ is a sequence
of $\Sigma$-valued random vectors $\{ X(k)\}_{k\in\N}$ with the Markov property, which is the equality of a probability of an event conditioned on past events
and probability of the same event conditioned on the current state, \emph{i.e.}, we always have
\begin{align}
\probability \left( X(k+1) \in \mathcal{G} \mid X(j)=x_{j}, \,  j=0, 1, \dots, k \right) \nonumber \\
= \probability\big(X(k+1)\in \mathcal{G} \mid X(k)=x_{k}\big), \nonumber
\end{align}
where $\mathcal{G}$ is an event
and $k \in \N$. We assume the Markov chain is time-homogeneous. The transition operator $\mathcal P$ of the Markov chain is defined
for $x \in \Sigma$, $\mathcal{G} \in \mathcal{B}\left(\Sigma\right)$ by
\begin{align}
P(x,\mathcal{G}) := \probability(X(k+1) \in \mathcal{G} \; \vert \; X(k) = x).    
\end{align}
If the initial condition $X(0)$ is distributed according to an initial distribution
$\lambda$, we denote by $\probability_\lambda$ the probability measure induced on
the path space, i.e., space of sequences with values in $\Sigma$. Conditioned on an
initial distribution $\lambda$, the random variable $X(k)$ is distributed
according to the probability measure $\lambda_k$ which is determined inductively by $\lambda_0=\lambda$ and
\begin{equation}
\label{eq:measureiteration}
\lambda_{k+1}(\mathcal{G})  := \int_{\Sigma} \mathcal P(x, \mathcal{G}) \, \lambda_k(d x),
\end{equation}
for $\mathcal{G} \in \mathbb{B}$. 

A probability measure $\mu$ is called
invariant with respect to the Markov process $\{ X(k) \}$ if it is a fixed
point for the iteration described by \eqref{eq:measureiteration},
i.e., if 
\begin{align}\label{df: inv-meas}
\mathcal P \mu = \mu\quad
\end{align}
An invariant probability measure $\mu$ is called attractive, if for every probability measure $\nu$ the sequence $\{\lambda_k \}$ defined by \eqref{eq:measureiteration} with initial condition $\nu$ converges to $\mu$ in distribution.

In an iterated function systems with place dependent probabilities \citep{Elton1987,Barnsley1988(1),Barnsley1989}, we are given a set of maps $\{ f_j : \Sigma \to \Sigma\; \vert \; j \in \mathcal{J} \}$, where $\mathcal{J}$ is a (finite or countably infinite) index set. Associated to these maps, there are probability functions
\begin{align}
p_j: \Sigma \to [0,1]\text{ with } \sum_{j\in \mathcal{J}} p_j(x) = 1 \text{ for all } x\in \Sigma.  
\label{df: prob-funct}
\end{align}
The state $X(k+1)$ at time $k+1$ is then given by $f_j(X(k))$ with probability $p_j(X(k))$,
where $X(k)$ is the state at time $k$. Sufficient conditions for the existence of a unique attractive
invariant measure can be given in terms of ``average contractivity'' \citep{elton1987ergodic,BarnsleyDemkoEltonEtAl1988,barnsley1989recurrent}.
Any discrete-time Markov chain can be generated by an \emph{iterated function system with probabilities} see Kifer\cite[Section 1.1]{Kifer2012} or \cite[Page 228]{Bhattacharya2009}, although such representation is not unique, see, Stenflo \cite{Stenflo1999}. Although not well known, iterated function systems (IFS) are a convenient and rich class of Markov processes. The class of stochastic systems arising from the dynamics of multi-agent interactions can be modelled and analysed using IFS in a particularly natural way. A wealth of results\citep{Elton1987,Barnsley1988(2),Barnsley1989,Barnsley2013,Stenflo2001(1),Szarek2003(1),Steinsaltz1999, Walkden2007,Barany2015,Diaconis1999,Iosifescu2009,Stenflo2012(s)} then applies.

\section{The Closed-Loop Model as Iterated Random Functions}
Let us now model the system of Figure \ref{system} by a Markov chain on a state space representing all the system components. In general, one could consider:
\begin{align}
\label{eq:nonlinear-agents-all}
\phantom{{\mathcal A}}  &\left\{ \begin{array}{ccl}
x_i(k+1) &\in& \{  {\mathcal W}_{ij}(x_i(k)) \; \vert \; j=1,\ldots, w_i \}  \\
y_i(k) &\in& \{  {\mathcal H}_{ij}(x_i(k))  \; \vert \; j=1,\ldots, h_i \}, \\
\end{array} \right. \\
& \hspace*{1.6cm} y(k) = \sum_{i=1}^N y_i(k),
\end{align}
\begin{equation}
\label{eq:nonlinear-filter}
{\mathcal F} ~:~ \left\{ \begin{array}{ccl}
x_f(k+1) &=& {\mathcal W}_{f}(x_f(k),y(k)) \\
\hat y(k) &=& {\mathcal H}_{f}(x_f(k),y(k)),
\end{array} \right.
\end{equation}
\begin{equation}
\label{eq:nonlinear-cont}
{\mathcal C} ~:~ \left\{ \begin{array}{ccl}
x_c(k+1) &=& {\mathcal W}_{c}(x_c(k),\hat y(k),r) \\
\pi(k) &=& {\mathcal H}_{c}(x_c(k),\hat y(k),r),
\end{array} \right.
\end{equation}
In addition, we have (e.g., Dini) continuous probability functions
$p_{ij},p'_{il}:\Pi \to [0,1]$ so that
the probabilistic laws \eqref{eq:problaws} are satisfied.
If we denote by $\mathbb{X}_i, i=1,\ldots,N, \mathbb{X}_C$ and $\mathbb{X}_F$ the state spaces of the
agents, the controller and the filter, then the system evolves on the
overall state space $\mathbb{X} := \prod_{i=1}^N \mathbb{X}_i \times \mathbb{X}_C \times \mathbb{X}_F$
according to the dynamics
\begin{equation}
\label{eq:6}
x(k+1) := \begin{pmatrix}
(x_i)_{i=1}^N  \\
x_f \\
x_c
\end{pmatrix} (k+1) \in \{ F_m(x(k)) \,\vert\, m \in {\mathbb M}\}.
\end{equation}
where each of the maps $F_m$ is of the form
\begin{equation}
\label{eq:F_m-definition}
F_m(x(k)) \defeq  \begin{pmatrix}
( {\mathcal W}_{ij}(x_i (k)) )_{i=1}^N  \\
{\mathcal W}_f(x_f(k), \sum_{i=1}^N  {\mathcal H}_{i\ell}(x_i (k))) \\
{\mathcal W}_c(x_c(k), {\mathcal H}_f(x_f(k), \sum_{i=1}^N  {\mathcal H}_{i\ell}(x_i (k))))
\end{pmatrix}
\end{equation}
and the maps $F_m$ are indexed by indices $m$ from the set
\begin{equation}
\label{eq:8}
\mathbb{M} \defeq \prod_{i=1}^N \{ (i,1), \ldots, (i,w_i) \} \times \prod_{i=1}^N \{ (i,1), \ldots, (i,h_i) \}.
\end{equation}
By the independence assumption on the choice of the transition maps and
output maps for the agents, for each multi-index
$m=((1,j_1),\ldots,(N,j_N),(1,l_1),\ldots,(N,l_N))$ in this set, the
probability of choosing the corresponding map $F_m$ is given by
\begin{multline}
\label{eq:9}
\probability \left( x(k+1)=F_m(x(k)) \right) = \\ \left(\prod_{i=1}^N
p_{ij_i}(\pi(k)) \right) \left( \prod_{i=1}^N p'_{il_i}(\pi(k))
\right) =: q_m(\pi(k)).
\end{multline}
We have obtained an iterated function system of the previous subsection \ref{sec:markov}.
\subsection{Our Objective}
We aim to achieve regulation, with probability $1$
\begin{equation}
\label{eq:1}
\sum_{i = 1}^N y_i(k) = y(k) \leq r
\end{equation}
to some reference $r>0$, given by the installed capacity.
Further, we aim for predictability, in the sense that for each agent $i$ there exists a
constant $\overline{r}_i$ such that
\begin{equation}
\label{eq:3}
\lim_{k\to \infty} \frac{1}{k+1} \sum_{j=0}^k y_i(j) = \overline{r}_i,
\end{equation}
where the limit is independent of initial conditions.
This can be guaranteed, when one has a unique invariant measure for the closed-loop model as iterated random functions.
The requirement of \textbf{fairness} is then for the limit $\overline{r}_i$ to coincide
for all $1 \le i \le N$.
\subsection{A Motivating Negative Result}\label{sec:comments-pi-negative}
The motivation for our work stems from the fact that controllers with integral action \citep{Franklin, Franklin_dig}, such as the Proportional-Integral (PI) controller, fail to provide unique ergodicity, and hence the fairness properties.
In many applications, controllers with integral action are widely adopted, including leading control systems \cite{barker2007speedtronic} in power generation.
A simple PI control can be implemented as:
\begin{equation}\label{pid1}
\pi(k) = \pi(k-1) + \kappa \big[ e(k) - \alpha e(k-1) \big],
\end{equation}
which means its transfer function from $e$ to $\pi$, in terms of the ${\mathcal Z}$ transform, is given by
\begin{equation}\label{pid2}
C(z) = \kappa\frac{1 - \alpha z^{-1}}{1 - z^{-1}}.
\end{equation}
Since this transfer function is not asymptotically stable, any associated realisation matrix will not be Schur.
Note that this is the case for any controller with any sort of integral
action, \emph{i.e.}, a pole at $z = 1$.
\begin{thm}[\cite{ErgodicControlAutomatica}]
\label{thm:pole}
Consider $N$ agents with states $x_i, i=1,\ldots,N$. Assume
that there is an upper bound $m$ on the different values the agents
can attain, \emph{i.e.}, for each $i$ we have $x_i \in \mathbb{A}_i =\{
a_1,\ldots,a_{m_i} \}\subset \R$
for a given set ${\mathbb{A}}_i$ and $1 \leq m_i \leq m$.

Consider the feedback system in Figure \ref{system}, where ${\mathcal
F}\, : \, y \mapsto \hat y$ is a finite-memory moving-average (FIR)
filter.  Assume the controller ${\mathcal C}$ is a linear marginally
stable single-input single-output (SISO) system with a pole $s_1 =
e^{q j \pi}$ on the unit circle where $q$ is a rational number,
$j$ is the imaginary unit,
and $\pi$ is Archimedes' constant $3.1416$. In
addition, let the
probability functions $p_{il} : \R \to [0,1]$ be continuous for all
$i=1, \ldots, N, l=1,\ldots,m_i$, \emph{i.e.}, if
$\pi(k)$ is the output of ${\mathcal C}$ at time $k$, then $\probability(x_i(k+1)=a_l)=p_{il}(\pi(k))$. Then
the following holds.
\begin{enumerate}
\item[(i)] The set ${\mathbb O}_{\mathcal F}$ of possible output values of the filter ${\mathcal F}$ is finite.
\item[(ii)] If the real additive group ${\mathcal E}$ generated by
$\{ r - \hat y \mid \hat y \in {\mathbb O}_{\mathcal F} \}$ with $r$ from \eqref{eq:1} is discrete, then
the closed-loop system cannot be uniquely ergodic.
\item[(iii)] For rational ${\mathbb A}_i
\subset \mathbb{Q}$ for all $i=1,\ldots,N$,  $r
\in \mathbb{Q}$, and rational coefficients of the FIR filter ${\mathcal F}$, the closed-loop
system cannot be uniquely ergodic.
\end{enumerate}
\end{thm}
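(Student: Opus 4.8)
The plan is to handle the three parts in the order (i), (iii), (ii), disposing of the combinatorial claim first, reducing (iii) to (ii), and concentrating the ergodic-theoretic work in (ii). For (i) I would simply propagate finiteness: each output $y_i(k)={\mathcal H}_{i\ell}(x_i(k))$ ranges over the finite set $\mathbb{D}_i$ (equivalently the states stay in the finite set $\mathbb{A}_i$), so the aggregate $y(k)=\sum_{i=1}^N y_i(k)$ lives in the finite sumset; since ${\mathcal F}$ is FIR, $\hat y(k)$ is a fixed linear combination of the last $d+1$ aggregates $y(k),\dots,y(k-d)$ for some finite memory $d$, hence $\hat y(k)$ takes only finitely many values and $\mathbb{O}_{\mathcal F}$ is finite.

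Next I would reduce (iii) to (ii). With rational $\mathbb{A}_i$, rational $r$, and rational FIR coefficients, every element of $\mathbb{O}_{\mathcal F}$ is rational, so $\{\,r-\hat y : \hat y\in\mathbb{O}_{\mathcal F}\,\}$ is a finite subset of $\mathbb{Q}$. The group $\mathcal{E}$ it generates is therefore a finitely generated subgroup of $(\mathbb{Q},+)$; clearing denominators places it inside $\tfrac1M\mathbb{Z}$ for a common denominator $M$, so $\mathcal{E}$ is infinite cyclic and in particular discrete. Thus the hypothesis of (ii) is met and (iii) is immediate.

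The heart is (ii), which I would attack by constructing a continuous, non-constant function that the dynamics preserve pathwise. Write the controller in state space with system matrix $A_c$ carrying the unit-circle eigenvalue $s_1=e^{q j\pi}$; since $q$ is rational, $s_1$ is a root of unity of some order $T$. Taking a left eigenvector $v^*A_c=s_1v^*$ and $\zeta(k)=v^*x_c(k)$ gives $\zeta(k+1)=s_1\zeta(k)+b\,e(k)$ with $b=v^*B_c$ and $e(k)=r-\hat y(k)$ ranging in the finite generating set of $\mathcal{E}$. For the real poles $s_1=\pm1$ the de-rotated coordinate $\eta(k)=s_1^{-k}\zeta(k)$ is real and satisfies $\eta(k+1)-\eta(k)=s_1^{-(k+1)}b\,e(k)\in b\mathcal{E}$, so $\eta\bmod b\mathcal{E}$ is conserved along \emph{every} trajectory. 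As $b\mathcal{E}$ is discrete, $\mathbb{R}/b\mathcal{E}$ is a circle and $\Phi:x\mapsto \eta\bmod b\mathcal{E}$ is a continuous, non-constant map with $\Phi\circ F_m=\Phi$ for all $m\in\mathbb{M}$; hence $\mathcal{P}\Phi=\Phi\sum_m q_m=\Phi$, the level sets of $\Phi$ are absorbing, and restricting the chain to two of them produces two mutually singular invariant probability measures, so the closed loop is not uniquely ergodic. For a general root of unity I would pass to the $T$-fold iterate, where $\zeta(k+T)-\zeta(k)=b\sum_{\ell=0}^{T-1}s_1^{T-1-\ell}e(k+\ell)$ has increments in the group $G\subset\mathbb{C}$ generated by $\{s_1^{m}b\,c:0\le m<T\}$, $c$ a generator of $\mathcal{E}$; when $G$ is a lattice, the pair $(\zeta\bmod G,\ k\bmod T)$ is again a continuous conserved quantity and the same decomposition applies.

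The hard part will be exactly the discreteness of $G$ for genuinely complex poles. Writing $s_1=\omega$ primitive of order $T$, $G$ equals, up to the fixed scaling $bc$, the additive group generated by all $T$-th roots of unity, i.e.\ the cyclotomic ring $\mathbb{Z}[\omega]$; this is a lattice in $\mathbb{C}$ precisely when $[\mathbb{Q}(\omega):\mathbb{Q}]=\varphi(T)\le 2$, that is $T\in\{1,2,3,4,6\}$ (the integrator $s_1=1$, the period-two pole $s_1=-1$, and the Gaussian/Eisenstein cases), and is \emph{dense} otherwise. So the conserved-coset construction, which uses only discreteness of $\mathcal{E}$ on the real line, extends verbatim through $\varphi(T)\le 2$ but collapses for $\varphi(T)\ge 4$, where the two real centre coordinates equidistribute and no continuous invariant function survives. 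For those poles I would not seek a pointwise conserved quantity but instead argue non-unique ergodicity by other means — exhibiting a continuous observable whose Birkhoff averages still depend on the initial phase of the undamped oscillation, or showing that discreteness of $\mathcal{E}$ prevents tightness of any candidate invariant measure — and I expect this complex-pole regime to be the main technical obstacle.
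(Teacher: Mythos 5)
First, note that the paper does not actually prove Theorem~\ref{thm:pole}: it is imported verbatim from \cite{ErgodicControlAutomatica}, so there is no in-paper proof to match yours against. Judged on its own terms, your treatment of (i) and of the reduction of (iii) to (ii) is correct and is surely what the cited proof does. For (ii), your conserved-quantity argument (the de-rotated centre coordinate taken modulo the discrete group of increments, giving a continuous non-constant function invariant under every $F_m$) is the right mechanism and works for $s_1=\pm1$ and, as you observe, for any root of unity with $\varphi(T)\le 2$.

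There are, however, two genuine gaps. The first you flag yourself but do not close: the theorem as stated allows \emph{any} rational $q$, hence any primitive $T$-th root of unity, and for $\varphi(T)\ge 4$ the increment group $\mathbb{Z}[\omega]\cdot bc$ is dense in $\mathbb{C}$, so no continuous coset-valued conserved quantity exists and your construction collapses exactly where the statement still claims failure of unique ergodicity. A proof of (ii) must cover this regime (e.g.\ by working with the closures of the countable reachable sets of the sampled $T$-step system and showing they can be separated for suitable initial data, or by exhibiting an observable whose Birkhoff averages retain the initial phase); leaving it as ``the main technical obstacle'' means the theorem is not proved. The second gap is the final inference: from two disjoint absorbing level sets you conclude ``two mutually singular invariant probability measures,'' but that requires an invariant measure to \emph{exist} on each level set, and Krylov--Bogolyubov needs tightness, which is not automatic here --- a marginally stable mode driven by an error signal of non-zero mean has unbounded trajectories (that is precisely how integral action works). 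The cleaner route, consistent with how the paper uses ``uniquely ergodic'' (initial-condition-independence of the Ces\`aro limits in \eqref{eq:3} via a unique \emph{attractive} invariant measure), is to observe that $\Phi(x(k))=\Phi(x(0))$ for all $k$ forces the Birkhoff average of any function of $\Phi$ to depend on $x(0)$, which refutes unique ergodicity directly without manufacturing invariant measures. I would restate the conclusion of (ii) that way and then concentrate all remaining effort on the dense-group case.
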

This suggests that it is perfectly possible
for the closed loop both to perform its regulation function well and, at the same time, to
destroy the ergodic properties of the closed loop.
We demonstrate this in a realistic load-aggregation scenario in Section \ref{sec:numeric}.
\section{The Main Result}\label{sec:nonlinear}
For a linear controller and filter, \cite{ErgodicControlAutomatica} have established conditions that assure the existence of a unique invariant measure for the closed-loop. Let us consider non-linear controllers and filters, as required in power systems, and seek unique ergodicity conditions.
Incremental stability is well-established concept to describe the asymptotic property of differences between any two solutions. One can utilise the concept of incremental input-to-state stability, which is defined as follows:
\begin{dfn}[Incremental ISS, \cite{angeli2002lyapunov}]
Let $\mathcal U
$ denote the set of all 
input functions $u: \mathbb Z_{\ge k_0}\to \mathbb R^d$%
. Suppose, $F: \mathbb R^d \times \R^n\to \mathbb R^n$ is continuous, then the discrete-time non-linear dynamical system
\begin{align}
x(k+1) = F(x(k),u(k)),
\label{system-inciss}
\end{align}
is called (globally) \emph{incrementally input-to-state-stable} (incrementally ISS), if there exist $\beta\in \mathcal{KL}$ and $\gamma\in\mathcal K$ such that
for any pair of inputs $u_1, u_1\in\mathcal{U}$ and any pair of initial condition  $\xi_1, \xi_2 \in \R^n$:
\begin{align*}
&\|x(k, \xi_1, u_1)-x(k, \xi_2, u_2)\|\\ 
&\le \beta(\|\xi_1 - \xi_2\|, k)+ \gamma(\|u_1-u_2\|_{\infty}),\quad \forall k\in \N.   
\end{align*}
\end{dfn}
\begin{dfn}[Positively Invariant Set]
A set $\mathcal X\subseteq \mathbb R^n$ is called positively invariant under the system  \eqref{system-inciss} if for any initial state $x(k_0)=\xi\in \mathcal X$  we have $\phi(k)\in \mathcal X \quad \forall k\ge k_0$.
\end{dfn}
\begin{dfn}[Uniform Exponential Incremental Stability] The system  \eqref{system-inciss} is uniformly exponentially incrementally
stable in a positively invariant set $\mathcal X$ if there exists $\kappa \ge 1$ and $\lambda >1$ such that for any pair of initial states $\xi_1, \xi_2 \in \mathcal X$, for any pair of  $u_1, u_2\in \mathcal U$, and for all $k\ge k_0$ the following holds
\begin{align}
&\|\phi (k,k_0,\xi_1, u_1)-\phi (k,k_0,\xi_2, u_2)\|\nonumber\\
&
\le \kappa \|\xi_1-\xi_2\| \lambda^{-(k-k_0)}\|u_1-u_2\|_{\infty}.
\end{align}
If $\mathcal X= \mathbb R^n$, then the system  \eqref{system-inciss} is called uniformly globally exponentially incrementally stable.
\end{dfn}
With this notation, we can employ contraction arguments for iterated function systems \cite{BarnsleyDemkoEltonEtAl1988} to prove:

\begin{thm} \label{thm02Ext}
Consider the feedback system depicted in Figure \ref{system}.
Assume that each agent
$i \in \{1,\cdots,N\}$ has a state governed by the non-linear iterated
function system
\begin{align}
\label{eq:nonlinear-agents}
x_i(k+1) &= {\mathcal W}_{ij}(x_i(k)) \\
y_i(k) &= {\mathcal H}_{ij}(x_i(k)),
\end{align}
where:
\begin{itemize}
\item[(i)] we have globally Lipschitz-continuous and continuously differentiable functions ${\mathcal W}_{ij}$ and ${\mathcal H}_{ij}$ with
global Lipschitz constant $l_{ij}$, resp. $l'_{ij}$
\item[(ii)]
we have Dini continuous probability functions
$p_{ij},p'_{il}$ so that
the probabilistic laws \eqref{eq:problaws} are satisfied
\item[(iii)] there are  scalars $\delta, \delta' > 0$ such that
$p_{ij}(\pi) \geq  \delta > 0$,
$p'_{ij}(\pi) \geq  \delta' > 0$ for all $\pi\in \Pi$ and all $(i,j)$
\item[(iv)] the composition $F_m$ 
\eqref{eq:F_m-definition} of agents' transition maps and the probability functions satisfy the contraction-on-average condition. Specifically, consider the space $\mathbb{X}_a := \prod_{i=1}^{N} \mathbb {X}_i$ of all agents and the maps 
$F_{a,m}: \mathbb{X}_a \to \mathbb{X}_a$, $(x_i)_{i=1}^N\mapsto (\mathcal{W}_{im}(x_i))_{i=1}^N$ together with the probabilities $p_m:\Pi\to[0,1]$, $m\in \mathbb{M}$, form an average contraction system in the sense that there exists a constant $0<\tilde c<1$ such that for all $x,\hat{x} \in \mathbb{X}_a, x\neq\hat{x}, \pi\in\Pi$ we have
\begin{equation}
\label{eq:average-contraction}
\sum_{m}p_m(\pi) \frac{\|F_{a,m}(x)-F_{a,m}(\hat{x})\|}{\|x-\hat{x}\|} < \tilde c.
\end{equation}
\end{itemize}
Then, for every incrementally input-to-state stable controller $\mathcal{C}$ and every incrementally input-to-state stable filter $\mathcal{F}$ compatible with the feedback structure, the feedback loop has a unique, attractive
invariant measure. In particular, the system is uniquely ergodic.
\end{thm}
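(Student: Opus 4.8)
The plan is to recognise the closed loop \eqref{eq:6}--\eqref{eq:9} as a place-dependent iterated function system $\{F_m, q_m\}_{m\in\mathbb{M}}$ on the product space $\mathbb{X} = \prod_{i=1}^N \mathbb{X}_i \times \mathbb{X}_C \times \mathbb{X}_F$, and then to invoke the contraction-on-average theory of \cite{BarnsleyDemkoEltonEtAl1988} together with the ergodic theorem of \cite{elton1987ergodic}. That body of results delivers a unique, attractive invariant measure---hence unique ergodicity---once two hypotheses hold: (A) the system $\{F_m,q_m\}$ contracts on average in some complete metric on $\mathbb{X}$, and (B) the place-dependent probabilities $q_m$ are Dini continuous and bounded away from zero. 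Hypothesis (B) is essentially free: by \eqref{eq:9} each $q_m$ is a finite product of the $p_{ij}$ and $p'_{i\ell}$, so Dini continuity follows from assumption (ii) and the uniform positivity $q_m \ge \delta^{N}(\delta')^{N} > 0$ from assumption (iii). The entire difficulty is therefore concentrated in (A).

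To establish (A) I would exploit the \emph{lower-triangular (cascade) structure} of the maps $F_m$ in \eqref{eq:F_m-definition}: the agent block $x_a \mapsto (\mathcal{W}_{ij}(x_i))_i$ depends only on the agent coordinates; the filter update $\mathcal{W}_f(x_f,\sum_i \mathcal{H}_{i\ell}(x_i))$ depends on $x_f$ and on the agents through $y$; and the controller update depends on $x_c$, on $x_f$, and on the agents through $\hat y = \mathcal{H}_f(x_f,y)$. The feedback from the controller re-enters only through the selection probabilities $q_m(\pi)$, never through the maps $F_m$ themselves, so the maps are genuinely triangular. The agent block contracts on average with rate $\tilde c < 1$ by assumption (iv). For the filter and controller I would convert incremental input-to-state stability into a one-step contraction in an adapted metric: restricting to the compact positively invariant set determined by the bounded attainable set of the averaging agent system and the finite incremental gains of $\mathcal{F},\mathcal{C}$, the $\mathcal{KL}$-estimate yields a uniform contraction after finitely many steps, and a converse-Lyapunov construction in the spirit of \cite{angeli2002lyapunov} then supplies incremental metrics $V_f,V_c$, norm-equivalent to $\|\cdot\|$, obeying one-step bounds with rates $\mu_f,\mu_c<1$ and input gains $\gamma_f,\gamma_c$. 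The global Lipschitz constants $l'_{ij}$ of the $\mathcal{H}_{ij}$ and that of $\mathcal{H}_f$ (assumption (i)) bound the input increments $\|y_1-y_2\| \le (\sum_i l'_{i\ell})\,\|x_a-\hat x_a\|$ and $\|\hat y_1-\hat y_2\| \le L_{\mathcal{H}_f}\big(\|x_f-\hat x_f\| + \|y_1-y_2\|\big)$ by the agent and filter state increments.

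I would then assemble the weighted metric $\rho(x,\hat x) = \|x_a-\hat x_a\| + \epsilon_f V_f(x_f,\hat x_f) + \epsilon_c V_c(x_c,\hat x_c)$ and average the one-step increment over $m$ with $q_m(\pi)$, using that $q_m$ factorises so that the agent part reduces to the marginal average of assumption (iv). Collecting the estimates above bounds $\sum_m q_m(\pi)\,\rho(F_m x, F_m\hat x)$ by the action of a lower-triangular gain matrix $M$ on the increment vector $(\|x_a-\hat x_a\|,V_f,V_c)$, whose diagonal is $(\tilde c,\mu_f,\mu_c)$, all strictly below one. Because $M$ is triangular with subunit diagonal, a standard small-gain/cascade argument lets me choose the weights $\epsilon_f,\epsilon_c>0$ small enough to absorb the off-diagonal coupling, giving $\sum_m q_m(\pi)\,\rho(F_m x, F_m\hat x) \le c\,\rho(x,\hat x)$ with $c<1$, uniformly in $\pi$. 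Since $V_f,V_c$ are norm-equivalent and the $\mathbb{X}_i$ are closed, $(\mathbb{X},\rho)$ is complete, so this is exactly hypothesis (A); combined with (B), the cited theorems yield the unique attractive invariant measure and unique ergodicity.

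I expect the main obstacle to be the middle step: turning the trajectory-wise incremental ISS assumption into a genuine one-step metric contraction for $\mathcal{F}$ and $\mathcal{C}$, and doing so on a set that is simultaneously positively invariant and compact (so that the $\mathcal{KL}$ bound upgrades to a uniform contraction). If a clean discrete-time converse-Lyapunov statement under only $\mathcal{KL}$ (rather than exponential) decay is unavailable, the fallback is to prove contraction-on-average directly for the $n$-step system $\{F_{m_n}\circ\cdots\circ F_{m_1}\}$ from the $\mathcal{KL}$-estimates and to invoke the version of the ergodic theorem for eventually-contractive iterated function systems; the cascade bookkeeping over $n$ steps is heavier but sidesteps the Lyapunov construction.
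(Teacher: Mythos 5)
Your proposal follows essentially the same route as the paper's proof: both reduce the closed loop to a place-dependent iterated function system, obtain the regularity and uniform positivity of the selection probabilities $q_m$ from (ii)--(iii) via the product formula \eqref{eq:9}, and establish average contractivity by combining the agents' contraction-on-average hypothesis (iv) with a converse-Lyapunov conversion of incremental ISS into a metric contraction for the filter--controller part, before invoking Theorem~2.1 of \cite{BarnsleyDemkoEltonEtAl1988} and the ergodic theorem of \cite{elton1987ergodic}. The only differences are in bookkeeping: the paper first merges filter and controller into a single incrementally ISS system using the cascade result of \cite[Theorem 4.7]{angeli2002lyapunov} and works with a max-type increment estimate, whereas you keep them separate inside a weighted metric governed by a triangular gain matrix (the same small-gain argument), and you correctly identify as the delicate step exactly what the paper delegates to \cite{tran2018convergence} and \cite{jiang2002converse}, namely upgrading the trajectory-wise $\mathcal{KL}$ bound to a one-step contraction on an invariant set.
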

\begin{proof}
To prove the result, we have to show that the assumptions of Theorem~2.1 in \cite{BarnsleyDemkoEltonEtAl1988} are satisfied. To this end, we note that the maps $F_m$ satisfy, by (i), the necessary Lipschitz conditions. Further, the probability maps have the required regularity and positivity properties by (ii) and (iii). Thus, it only remains to show that the maps $F_m$ defined in \eqref{eq:F_m-definition} have the average contraction property. The existence of a unique attractive invariant measure then follows from \cite{BarnsleyDemkoEltonEtAl1988} and unique ergodicity follows from \cite{elton1987ergodic}.

First we note that the filter and the controller are incrementally stable systems that are in cascade. It is 
shown in \cite[Theorem 4.7]{angeli2002lyapunov} that cascades of incrementally ISS systems are incrementally ISS. The proof is given for continuous-time systems but readily extends to the discrete-time case. It will therefore be sufficient to treat filter and controller together as one system with state $z$.

To show the desired contractivity property, consider two distinct points $x=(x_a,z)=((x_i),z), \hat{x}=(\hat{x}_a,\hat{z})=((\hat{x}_i),\hat{z})\in \mathbb{X}$. Let $L$ be an upper bound for the Lipschitz constants of the output functions $\mathcal{H}_{ij}$, $\mathcal{H}_f$. The assumptions on incremental ISS ensure that for each index $m$
\begin{multline*}
\|F_m(x) - F_m(\hat{x}) \| \leq \\ \max \{ \|F_{a,m}(x_a)  - F_{a,m}(\hat{x}_a) \|, \\ \beta_f(\|z-\hat{z}\|,1) + \gamma_f(L\|x_a - \hat{x}_a\|) \}.
\end{multline*}
As the maps $F_{a,m}$ satisfy the average contractivity condition, it is an easy exercise to see that also the sets of iterates satisfy the contractivity condition. The result then can be obtained using Theorem 15 of \cite{tran2018convergence}, where it is shown that incrementally stable systems are contractions,
using the converse Lyapunov theorem of \cite{jiang2002converse}.
The proof is concluded by invoking Theorem~2.1 in \cite{BarnsleyDemkoEltonEtAl1988}.
\end{proof}

Subsequently, one may wish to design a controller that is \emph{a priori} incrementally input-to-state-stable, e.g. considering extending the work on ISS-stable model-predictive controller \cite{1185106} or model-predictive controllers \cite{zarate2010opf,li2016sqp,wang2018sdp,li2019sequential,nojavan2020voltage,nikkhah2020stochastic,9511198} that are constrained to be small-signal stable. Alternatively, one may ask whether other controllers guaranteeing fairness are available, without explicitly adding incrementally input-to-state-stable constraints.
The implementation depends on the setting under consideration. 

\section{Numerical results}\label{sec:numeric}
Our numerical results are based on simulations utilising Matpower \cite{zimmerman2010matpower}, a standard open-source toolkit for power-systems analysis in version 7.1 running on Mathworks Matlab 2019b.
We use Matpower's power-flow routine to implement a non-linear filter, which models the losses in the alternating-current model. 

The model consists of an ensemble of $N$ DERs or partially-controllable loads divided into two groups that differ in 
their probabilistic response to the control signal provided by the load aggregator.
In particular, following the closed-loop of Figure \ref{system}:
The response of each system $\mathcal S_i$ to the control signal $\pi(k)$ provided by the load aggregator at time $k$ takes the form of 
probability functions, which suggest the probability a DERs is committed at time $k$ as a function of the control signal $\pi(k)$.
In our simulations, the two functions satisfying \eqref{df: prob-funct} are:
\begin{align}\label{eq:prob-func}
g_{i1}(x_i (k+1)=1)&=0.02+\frac{0.95}{1+\exp(-\xi\pi(k)-x_{01})} \\
g_{i2}(x_i (k+1)=1)&=0.98-\frac{0.95}{1+\exp(-\xi\pi(k)-x_{02})}
\end{align}
where we have implemented both PI and its lag approximant for the control of an ensemble of DERs or partially-controllable loads.
The binary-valued vector:
\begin{align}\label{eq:cmmtd-gnrtr}
u(k)\in \{0,1\}^N
\end{align} 
captures the output of the probability functions \eqref{eq:prob-func} in a particular realization. 

The commitment of the DERs within the ensemble is provided to Matpower, which using a standard power-flow algorithm computes 
the active power output $P(k)$ of the individual DERs:
\begin{align}\label{eq:output-gnrtr}
P(k)\in \mathbb{R}^N 
\end{align} 
which is aggregated into the total active power output $p(k) = \sum P(k)$ of the ensemble.
Then, a filter $\mathcal F$ is applied: 
\begin{align}\label{eq:filtr}
\hat p(k)= \frac{p(k)+p(k-1)}{2} + \textrm{losses}(k).
\end{align}
Notice that here we both smooth the total active power output with a moving-average filter and accommodate the losses in the AC model.
The error
\begin{align}\label{eq:err}
e(k)=r-\hat p(k)    
\end{align}
between the reference power output $r$ and the filtered value $\hat p(k)$ is then used as the input for the controller. 
Output of the controller $\pi(k)$ is a function of error $e(k)$ and an inner state of the controller $x_c (k)$. 
Signal of the controller is given by the PI or its lag approximant:
\begin{align}\label{eq:two-contrlr}
\pi_{\text{PI}}(k+1)&=\left[K_p e(k)+K_i \left(x_c (k)+e(k)\right)\right]\\
\pi_{\text{Lag}}(k+1)&=\left[K_p e(k)+K_i \left(0.99x_c (k)+e(k)\right)\right]
\end{align}
The procedure is demonstrated in the following pseudocode.
\begin{algorithm}[bht]
\SetAlgoLined
$P(0) = \textrm{power-flow}(u(0))$ for a given initial commitment $u(0)$ \\
\For{$i\gets0$ \KwTo $\infty$, i.e., each run of the simulation }{ 
\For{$i\gets1$ \KwTo $k_{\max}$, where the length $k_{\max}$ of the time horizon is given}{ 
		$p(k) = \sum P(k-1)$, cf. \eqref{eq:output-gnrtr} \\
		$\hat p(k) = {\mathcal F}(p(k))$, e.g. \eqref{eq:filtr}  \\
		$e(k) = r - \hat p(k)$, cf. \eqref{eq:err}  \\
		$\pi(k) = {\mathcal C}(e(k))$, cf. \eqref{eq:two-contrlr} \\
		$u(k) = {\mathcal S}(\pi(k))$, cf. \eqref{eq:prob-func} and \eqref{eq:cmmtd-gnrtr} \\
		$P(k) = \textrm{power-flow}(u(k))$, cf. \eqref{eq:output-gnrtr}
	}
 }
 \caption{Pseudo-code capturing the closed loop.}
\end{algorithm}

\subsection{A Serial Test Case}

\begin{figure}
\centering 
\includegraphics[width=0.75\columnwidth]{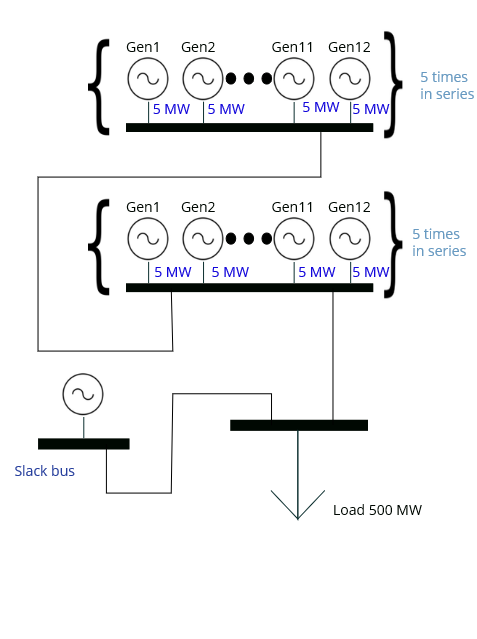}
\caption{A single-line diagram of the serial test case.}
\label{fig:distribution}
\end{figure}

First, let us consider a simple serial test case depicted in Figure \ref{fig:distribution}, where there are a number of buses connected in series, as they often are in distribution systems. In particular, the buses are: 
\begin{itemize}
    \item a slack bus, such as a distribution substation,  
    \item 5 buses with 12 DERs of 5 MW capacity connected to each bus, 
with probability function $g_{i2}$ of \eqref{eq:prob-func} modelling the response of these first 60 DERs to the signal,
\item 5 buses with 12 DERs of 5 MW capacity connected to each bus, with probability function $g_{i1}$ of \eqref{eq:prob-func}
modelling the response of these  60 DERs to the signal,
\item a single load bus with a demand of 500 MW.
\end{itemize}

We aim to regulate the system to the reference signal $r$, which is $300$ MW plus losses at time $k=0$.
Initially, the first 60 ($= 5 \cdot 12$) generators are off and the second following 60 ($= 5 \cdot 12$)
generators are on. 
Considering the ensemble is composed of 120 DERs, with a total capacity of 600 MW,
this should yield a load of less than 200 MW on the slack bus.

We repeated the simulations 300 times, considering the time horizon of 2000 time steps each. The results in 
Figures \ref{fig:linear03main1}--\ref{fig:linear03main3} present the mean over the 300 runs with a solid line and the region of mean $\pm$ standard deviation as a shaded area.
Notice that we are able to regulate the aggregate power output (cf. Figure ~\ref{fig:linear03main1}). With the PI controller, however, the state of the controller, and consequently the signal and the power generated at individual buses over the time horizon, are determined by the initial state of the controller (cf. Figures \ref{fig:linear03main2}--\ref{fig:linear03main3}).

\begin{figure*}[h]
\centering 
\includegraphics[width=0.45\textwidth]{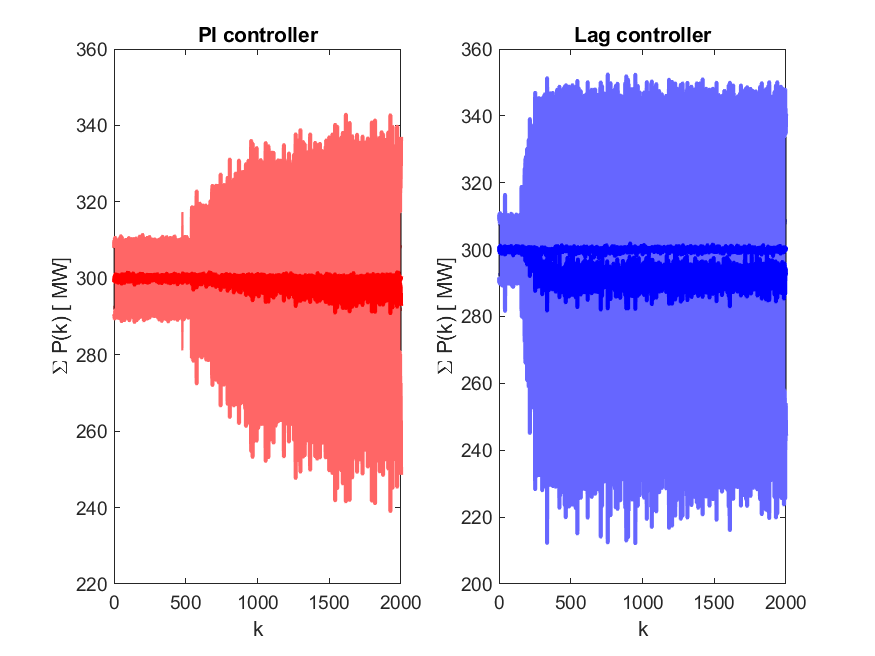}
\includegraphics[width=0.45\textwidth]{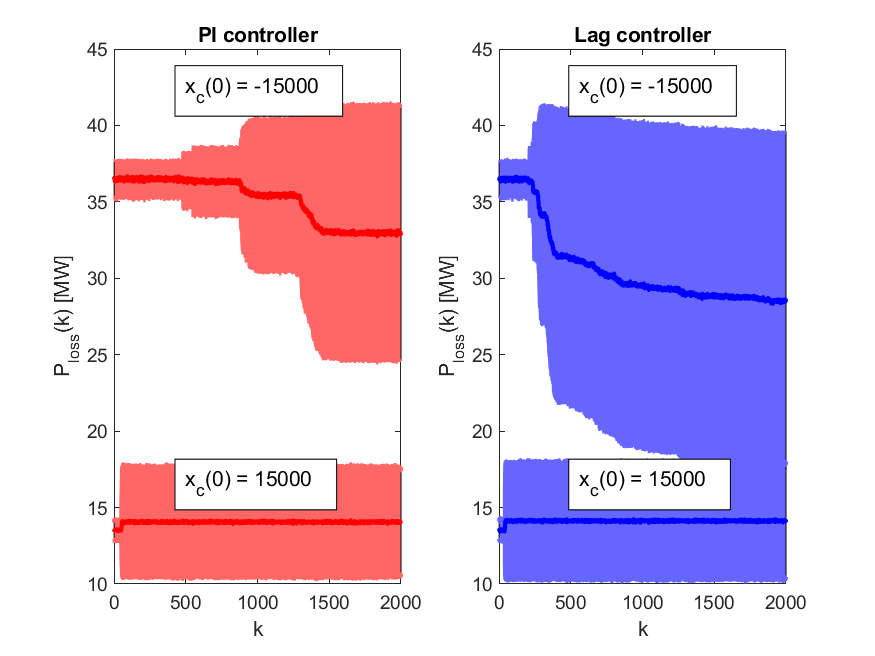}
\caption{Results of simulations on the serial test case regulated to 300 MW plus losses: 
Aggregate power produced by the ensemble (left) and the corresponding losses in transmission (right) as functions of time for the two controllers and two initial states of each of the two controllers.}
\label{fig:linear03main1}
\end{figure*}

\begin{figure*}[h]
\centering 
\includegraphics[width=0.45\textwidth]{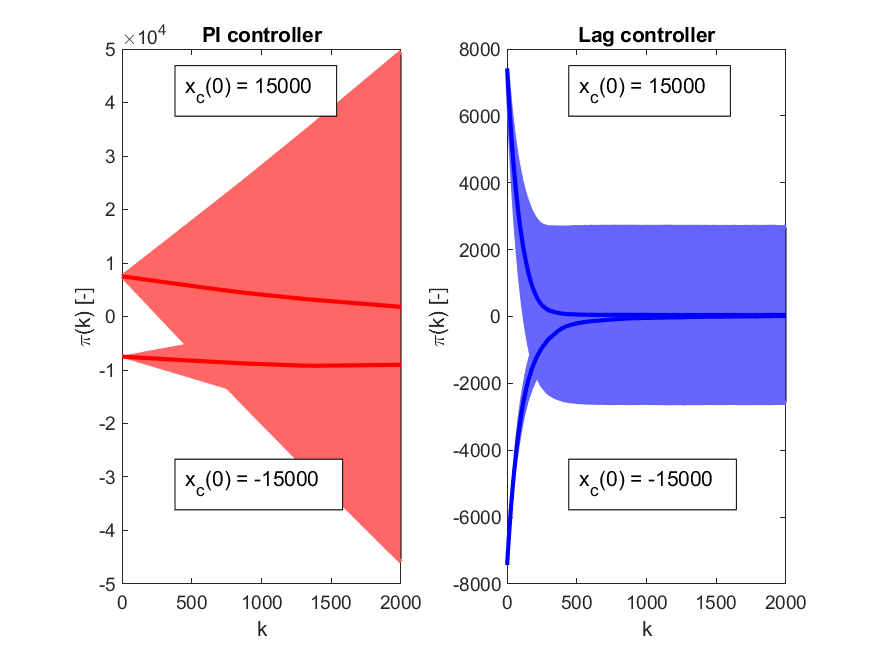}
\includegraphics[width=0.45\textwidth]{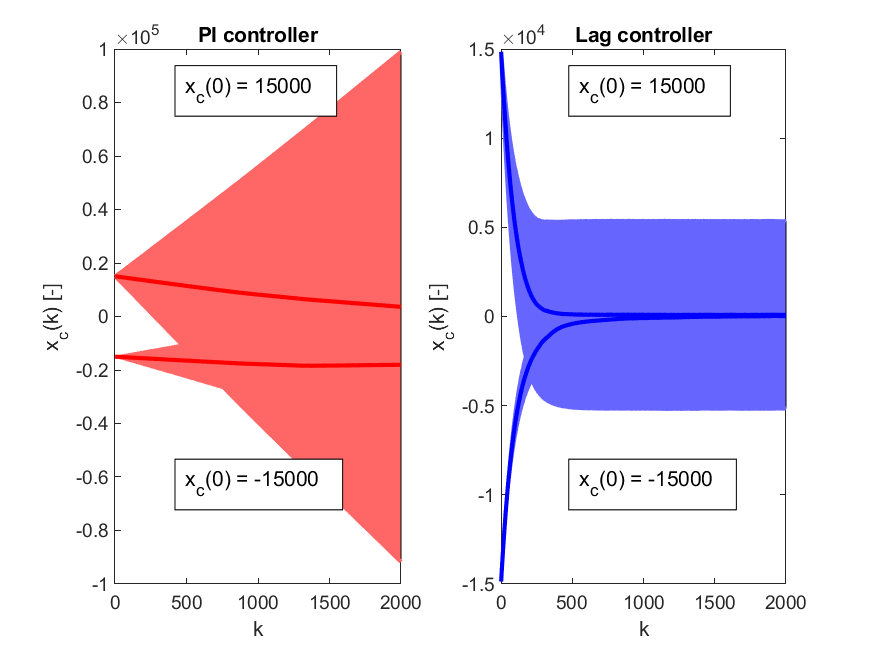}
\caption{Results of simulations on the serial test case:
Control signal (left) and the state of the controllers (right) as functions of time, for the two controllers and two initial states of each of the two controllers.}
\label{fig:linear03main2}
\end{figure*}

\begin{figure*}[h]
\centering 
\includegraphics[width=0.45\textwidth]{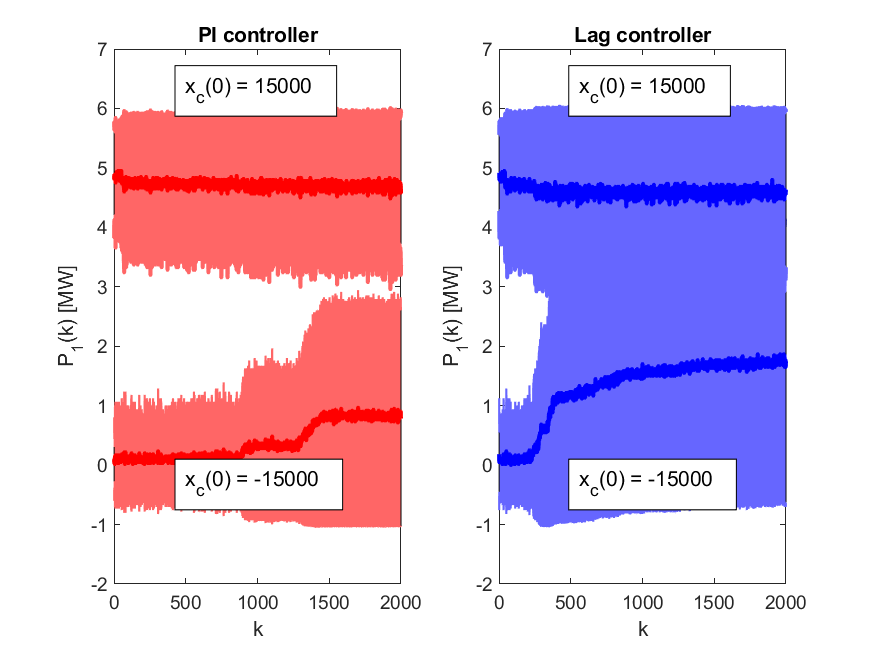}
\includegraphics[width=0.45\textwidth]{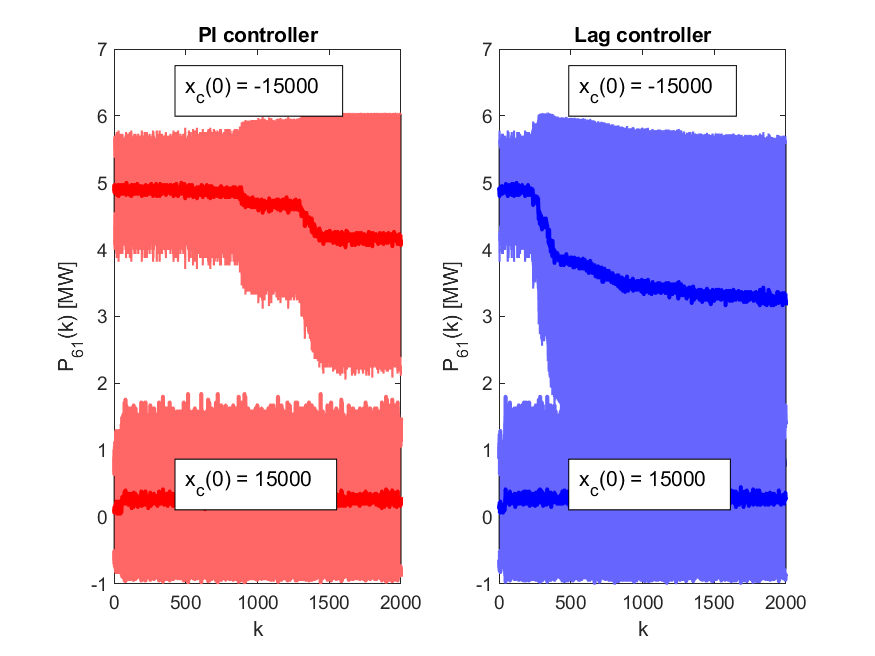}
\caption{Results of simulations on the serial test case: 
Powers at the first 60 DERs (left) utilising probability functions $g_{i2}$ of \eqref{eq:prob-func} and following 60 DERs (right) utilising probability functions $g_{i1}$ of \eqref{eq:prob-func}.}
\label{fig:linear03main3}
\end{figure*}

\begin{figure}
\centering 
\includegraphics[width=\columnwidth]{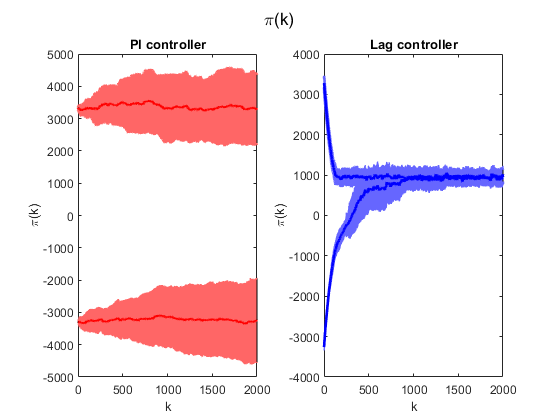}
\caption{Results of simulations on IEEE 118-bus test case: 
Control signal, as a function of time, for the two controllers and two initial states of each of the two controllers.}
\label{fig:sim2sig}
\end{figure}

\subsection{A Transmission-System Test Case}

Next, let us demonstrate the analogous results on the standard IEEE 118-bus test case with minor modifications. Notably, the ensemble is connected to two buses of the transmission system. At bus 10, 1 generator with a maximum active power output of 450 MW was replaced by 8 DERs with 110 MW of power each, out of which 4 DERs used probability function $g_{i1}$, where $x_{01}=660$.
The other four DERs used function $g_{i2}$, where $x_{02}=110$ and $\xi=100$. 
At bus 25, 1 generator with maximum active power output of 220 MW was replaced by four generators with active power output of 110 MW, out of which
two generators used probability function $g_{i1}$, and the other two used function $g_{i2}$ with $r=660$.

As in the serial test case, we have repeated the simulations 500 times, considering the time horizon of 2000 time steps each. 
Figure \ref{fig:sim2sig} yields analogical results to 
Figure \ref{fig:linear03main2} in the serial test case, where a different initial state $x_c (0)$ of the controller yields a very different control signal produced by the PI controller. This is not the case for the lag approximant. 
Consequently, some DERs may be activated much more often than others, all else being equal, solely as an artifact of the use of PI control.
Many further simulation results illustrating this behaviour are available in the Supplementary material on-line.

\section{Conclusions and Further Work}
We have introduced a notion of predictability of fairness in load aggregation and demand-response management, which relies on the concept of unique ergodicity \cite{9445023}, which in turn is based on the existence of a unique invariant measure for a stochastic model of a closed-loop model of the system.
Related notions of unique ergodicity have recently been used in social-sensing \cite{9445023}, and two-sided markets \cite{ghosh2021unique}; we envision it may have many further applications in power systems. 

The notions of predictability and fairness, which we have introduced, may be violated, if controls with poles on the unit circle (e.g., PI) are applied to load aggregation. 
In particular, the application of the PI controller destroys the ergodicity of the closed-loop model. 
Considering the prevalence of PI controllers within load frequency control, this is a serious concern.
On the other hand, we have demonstrated that certain other controllers guarantee predictability and fairness. 

An important direction for further study is to consider unique ergodicity of stability-constrained ACOPF \cite{zarate2010opf,li2016sqp,wang2018sdp,li2019sequential,nojavan2020voltage,nikkhah2020stochastic,9511198}, or related controllers based on stability-constrained model-predictive control (MPC). We believe there are a wealth of important results ahead.  

\paragraph*{Acknowledgements}
Ramen and Bob were supported in part by Science Foundation Ireland grant 16/IA/4610.
Jakub has been supported by OP VVV project CZ.02.1.01/0.0/0.0/16\_019/0000765 ``Research Center for Informatics''.
\FloatBarrier

\bibliography{ref,mdps,power}

\clearpage
\onecolumn
\normalsize
\appendix

For the convenience of the reader, we attach an overview of the notation and additional numerical results in the following appendices. 

\subsection{An Overview of Notation}

\begin{tabularx}{\linewidth}{ l | X }
\caption{A Table of Notation}\\\toprule\endfirsthead
\toprule\endhead
\midrule\multicolumn{2}{l}{\itshape continues on next page}\\\midrule\endfoot
\bottomrule\endlastfoot
\textbf{Symbol}                        & \textbf{Meaning}\\\midrule
$\mathbb{N}$                           & the set of natural numbers.\\
$\mathbb{Q}$                           & the set of rational numbers.\\
$\mathbb{R}$                           & the set of real numbers.\\
${\mathbb X}_i$                        & states-space of agent $i$.\\
${\mathbb X}_{\mathcal C}$             & states-space of controller.\\
${\mathbb X}_{\mathcal F}$             & states-space of filter.\\
${\mathbb X}$                          & overall states-space of the closed loop system.\\
${\mathbb Z}$                          & set of all integers.\\
$\mathcal G$                           & an event i.e an element in $\mathcal{B}\left(\Sigma\right)$.\\
$\mathcal J$                           & an index set.\\
$\Sigma$                               & a closed subset of $\mathbb R^n$.\\
$\mathcal{B}\left(\Sigma\right)$       & A Borel $\sigma$ algebra on $\Sigma$, consisting of all possible events.\\
$\{X_k\}_{k\in\N}$                     & A discrete-time Markov chain with state-space $\mathcal K$.\\
$\mathbf{P}$                           & A transition-probability operator.\\
$\mathbb P_{\lambda}$                  & the probability measure induced on
the path space.\\
$\lambda_k$                            & probability measures.\\
$\mu$                                  & a probability measure.\\
$j$                                    & an element in $\mathcal J$.\\
$\ell$                                 & cardinality of output maps.\\
$f_j$                                  & a function on $\mathcal K$.\\
$p_j$                                  & probability functions.\\
$\pi(k)$                               & control signal.\\
$\Pi$                                  & space of all control signal.\\
$\mathbb A_i$                          & space of all agent's actions.\\
$a_1,a_2,\dots, a_{L_i}$               & possibles actions.\\
$L_i$                                  & cardinality of action space of agent $i$.\\
$\mathbb D_i$                          & demand space of agent $i$.\\
$d_{i,1},\dots, d_{i,m_i}$             & possible demands.\\
$m_i$                                  & cardinality of demand space of agent $i$.\\
$n_i$                                  & dimension of the state-space of $i^{th}$ agent's private state-space.\\
$w_i$                                  & an affine map.\\
$h_i$                                  & number of output maps.\\
$\mathcal H_{i\ell}$                   & output maps.\\
$y_i(k)$                               & output of agent $i$.\\
$\hat y_(k)$                           & value of $y_(k)$ filtered by filter.\\
$p_{i\ell}(\pi)$                       & probability functions.\\
$p'_{i\ell}(\pi)$                      & probability functions.\\
$x_i(k)$                               & internal state of agent $i$ at time $k$.\\
$\mathcal W_{ij}$                      & transition maps.\\
$\mathcal F$                           & a filter.\\
$\mathcal C$                           & a controller.\\
$x_f(k)$                               & the internal state of the filter at time $k$ of dimension $n_f$.\\
$x_c(k)$                               & the internal state of the filter at time $k$ of dimension $n_c$\\
$\mathcal W_{c}$                       & map moddeling the controller.\\
$\mathcal H_{c}$                       & map moddeling the controller\\
$\epsilon$                             & a sufiiciently small positive real number\\
$l_m$                                  & Lipschitz constants.\\
$\nu^{\star}$                          & an invariant distribution.\\
$\beta$                                & a class $\mathcal {K L}$ function.\\
$\gamma$                               & a class $\mathcal K$ function.\\
$u(k)$                                 & a binary vector indicating the commitment of DERs in \eqref{eq:cmmtd-gnrtr}.\\
$p(k)$                                 & filtered value of the active power output of DERs in \eqref{eq:filtr}.\\
$P(k)$                                 & active power output of DERs in \eqref{eq:output-gnrtr}.\\
$\hat p(k)$                           &  aggregate active power output of the ensemble at time $k$.\\
$e(k)$                                 & error defined in \eqref{eq:err}.
\end{tabularx}

\clearpage

\subsection{Complete Numerical Results}

We provide numerical results for three test cases, two of which have been discussed in the main body of the text.
First, however, we present results for one additional, intentionally simple test case. 

\subsubsection{A Simple Test Case with Transmission Losses in the Filter}

The first one is a small 3-bus system depicted in Figure 1. 
At bus 1, there are $N = 10$ DERs, 
five of which use probability function  $g_{i1}$ with $x_{01}=200$. 
The other five DERs at bus 1 use function $g_{i2}$, where $x_{01}=40$ and $\xi =100$. 
Bus 2 is considered slack; therefore, the generator at bus 2 balances the power deviation of bus 1 and losses in order to fulfil power demand at bus 3. 
The simulations were run for both PI and its lag approximant for several different initial values of the controller's internal state $x_c (0)$. Figures \ref{fig:sim1power2}--\ref{fig:sim1sig} demonstrate that both controllers can regulate the ensemble around required power $r=200$. However, the PI controller's behaviour is strongly dependent on the initial condition of its internal state, whereas the lag approximant provides predictable behaviour in the long run. These results are in good agreement with simulation results in \cite{ErgodicControlAutomatica}.

\begin{figure}[h]
\centering 
\includegraphics[width=0.4\textwidth]{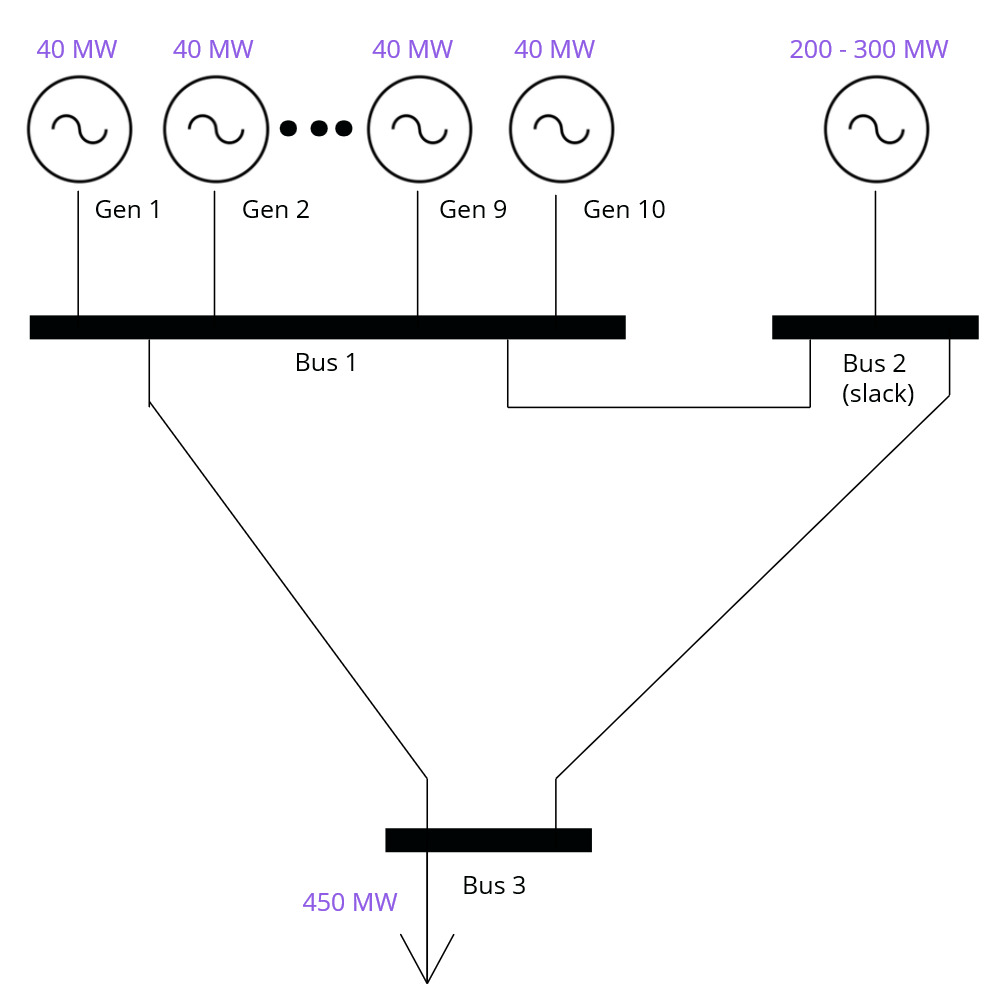}
\caption{A simple test case.}
\label{fig:sim1system}
\end{figure}

\begin{figure}
\centering 
\includegraphics[width=0.45\textwidth]{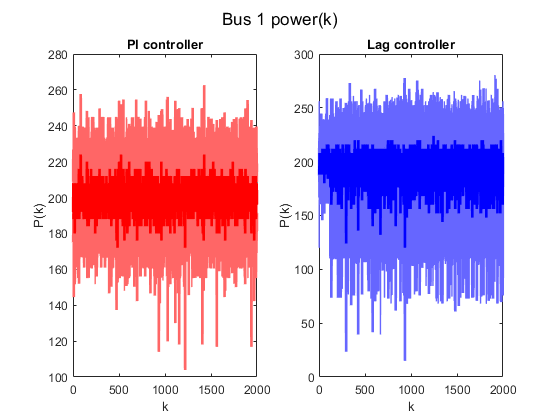} 
\includegraphics[width=0.45\textwidth]{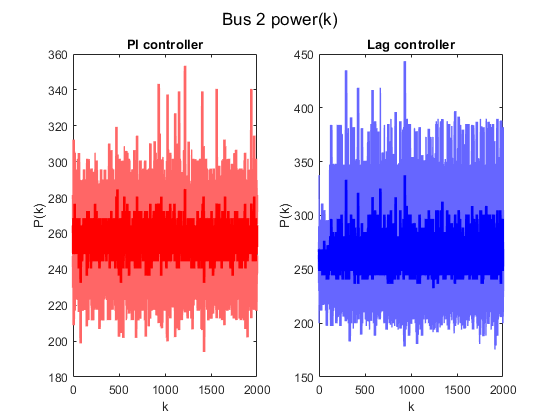}
\caption{Results of simulations on the system in Figure~\ref{fig:sim1system}: Power on buses 1 and 2 as a function of time
for the two controllers and two initial states $x_c$ of each of the two controllers.
Notice that both PI and its lag approximant are able to regulate the aggregate power produced by the ensemble of loads.}
\label{fig:sim1power2}
\end{figure}

\begin{figure}
\centering 
\includegraphics[width=0.45\textwidth]{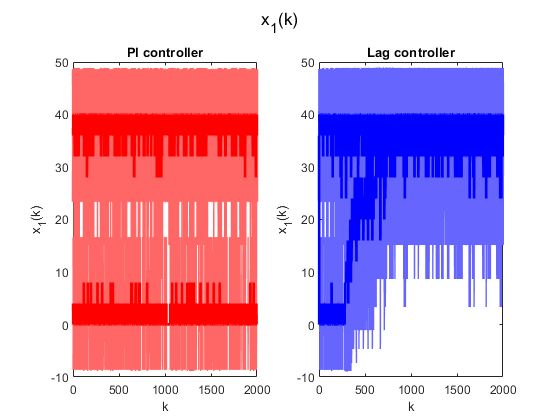}
\includegraphics[width=0.45\textwidth]{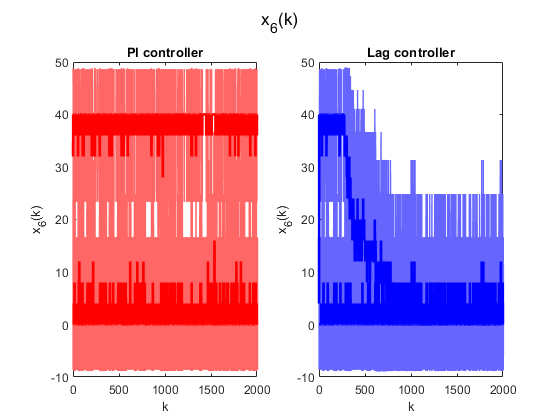}
\caption{Results of simulations on the system in Figure~\ref{fig:sim1system}: Observations available to the controller, as a function of time,
for the two controllers and two initial states $x_c$ of each of the two controllers.
Notice that both PI and its lag approximant receive similar inputs.}
\label{fig:sim1obs}
\end{figure}

\begin{figure}
\centering 
\includegraphics[width=0.45\textwidth]{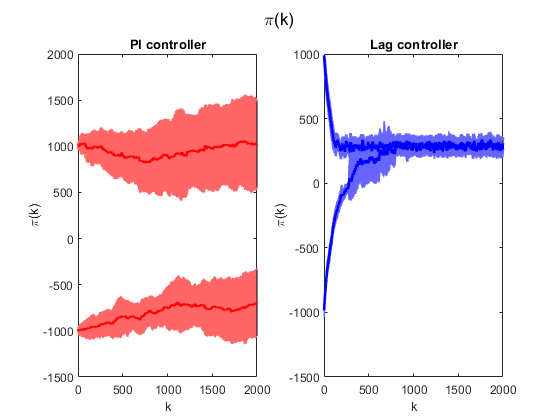}
\includegraphics[width=0.45\textwidth]{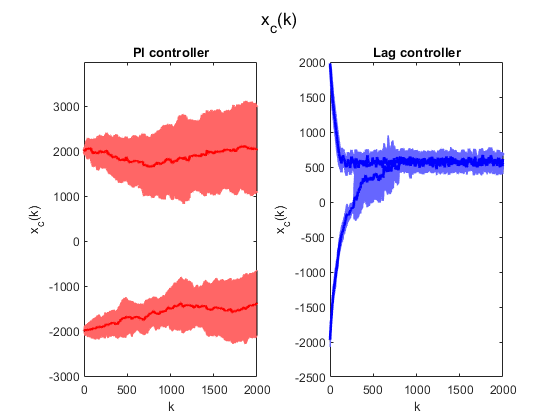}
\caption{Results of simulations on the system in Figure~\ref{fig:sim1system}: 
Control signal and the state of the controllers, as functions of time, for the two controllers and two initial states of each of the two controllers.}
\label{fig:sim1sigcon}
\end{figure}

\FloatBarrier
\subsection{A Serial Test Case with Losses in the Filter}

In our second example, we consider a serial test-case, with a number of buses connected in series. (A line-graph topology, in the language of graph theory.)
The buses are, in this order, 
\begin{itemize}
\item a slack bus, which could represent a distribution substation  
\item 5 buses with 12 DERs of 5 MW capacity each, 
and the ``type 2'' probability function. 
\item 5 buses with 12 DERs of 5 MW capacity each, 
and the ``type 1'' probability function. 
\item a single load bus with a 500 MW demand.
\end{itemize}

\begin{figure}[h]
\centering 
\includegraphics[width=0.4\textwidth]{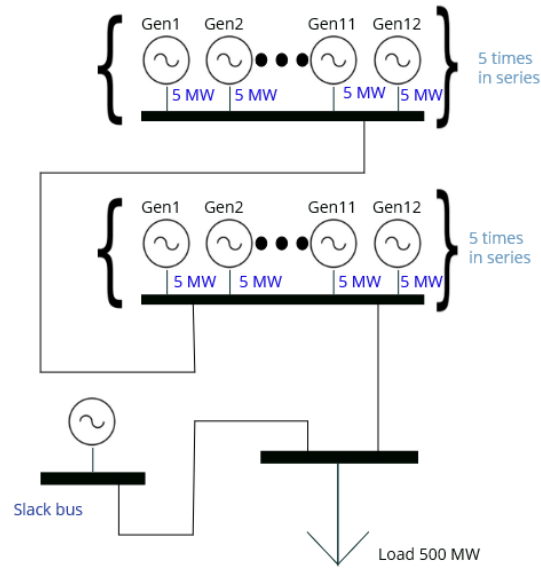}
\caption{A serial test case.}
\label{fig:sim2system}
\end{figure}

Overall, the ensemble is composed of 120 DERs, with a total capacity of 600 MW.
Initially, the first $5 \cdot 12$ generators are off and the second following $5 \cdot 12$
generators are on. 
We aim to regulate the power output of the ensemble to 60 running DERs (totalling 300 MW),
which should yield a load exceeding 200 MW on the slack bus (considering the losses).

In our first set of simulations on the serial test case, we repeat 500 times the simulations considering the time horizon of 2000 time steps each. $r=300$, $x_0 = 300$. The filter is $(y(k-1) + y(k))/2 + \textrm{losses}(k)$, whereby the losses hence propagate into the signal $\pi(k)$. 

As in the simple test case, we are able to regulate the aggregate power output (cf. Figure ~\ref{fig:linear02p1}). With PI controller, however, the state of the controller, and consequently the signal and the power at individual buses are determined by the initial state of the controller (cf. Figures \ref{fig:linear02p2}--\ref{fig:linear02p3}).

\begin{figure}[h]
\centering 
\includegraphics[width=0.45\textwidth,trim=0 0 0 10mm,clip]{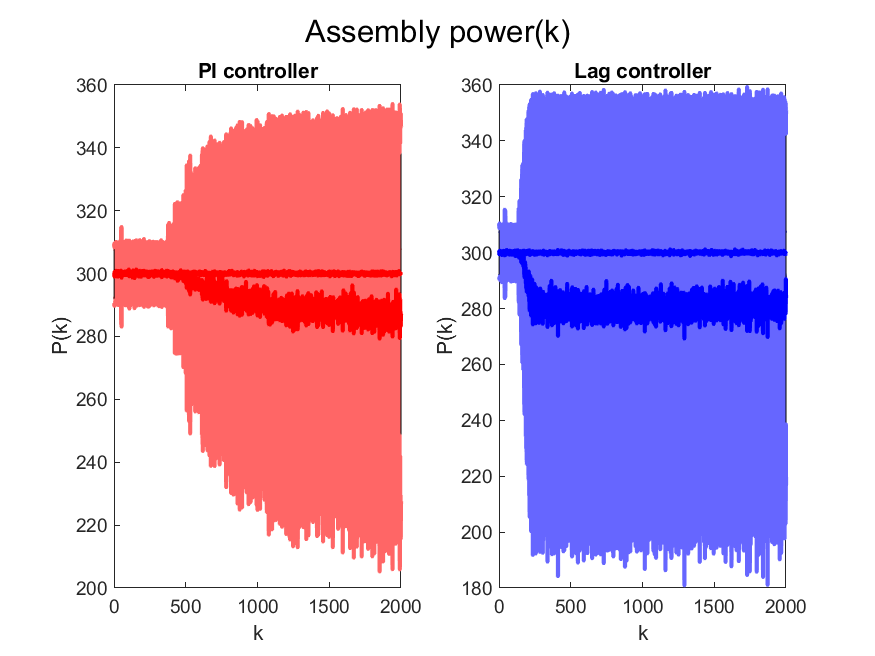}
\includegraphics[width=0.45\textwidth]{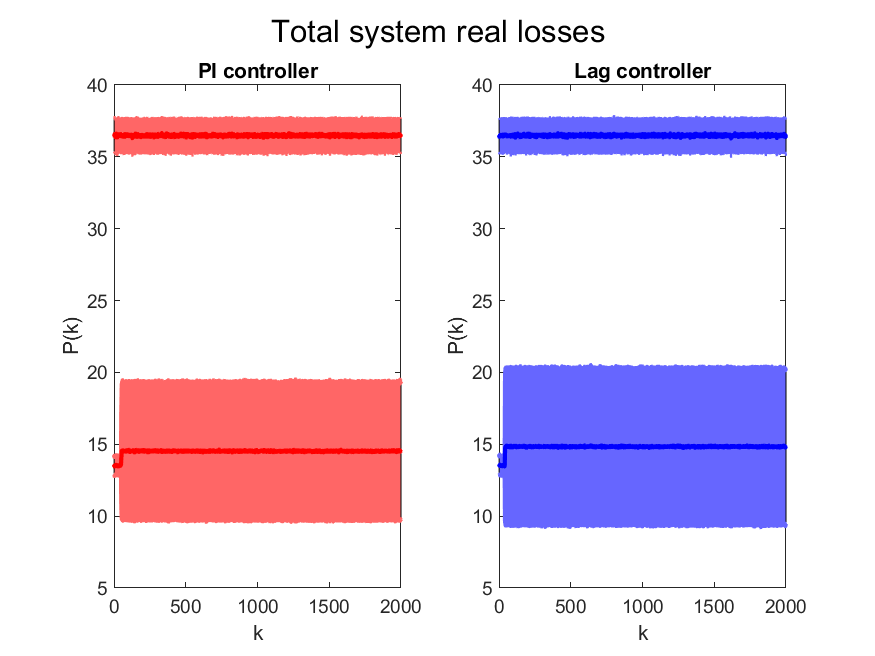}
\caption{Results of simulations on the serial test case regulated to 300 MW:
Aggregate power produced by the ensemble (left) and the corresponding lossess in transmission, as functions of time for the two controllers and two initial states of each of the two controllers.}
\label{fig:linear02p1}
\end{figure}

\begin{figure}[h]
\centering 
\includegraphics[width=0.45\textwidth]{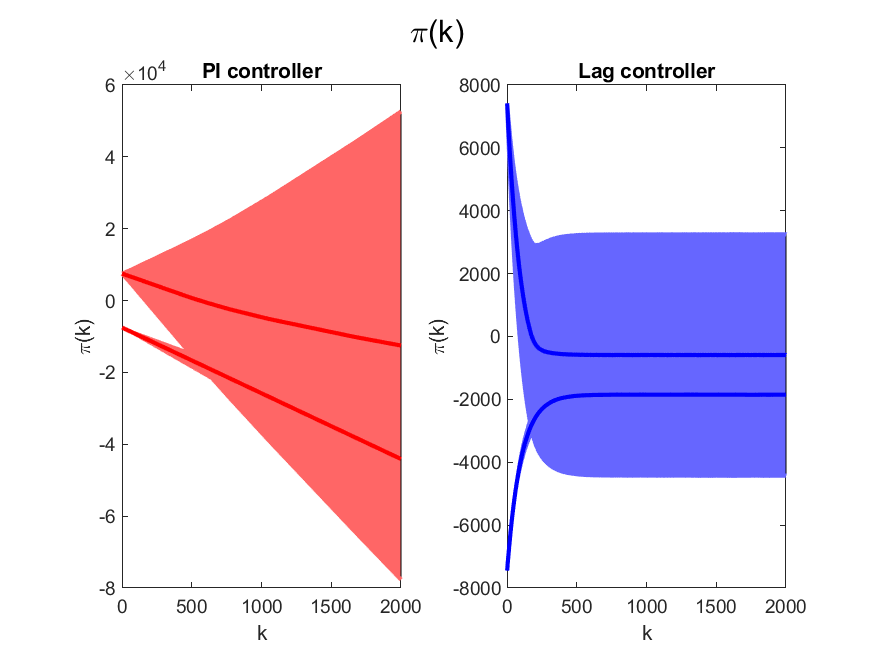}
\includegraphics[width=0.45\textwidth]{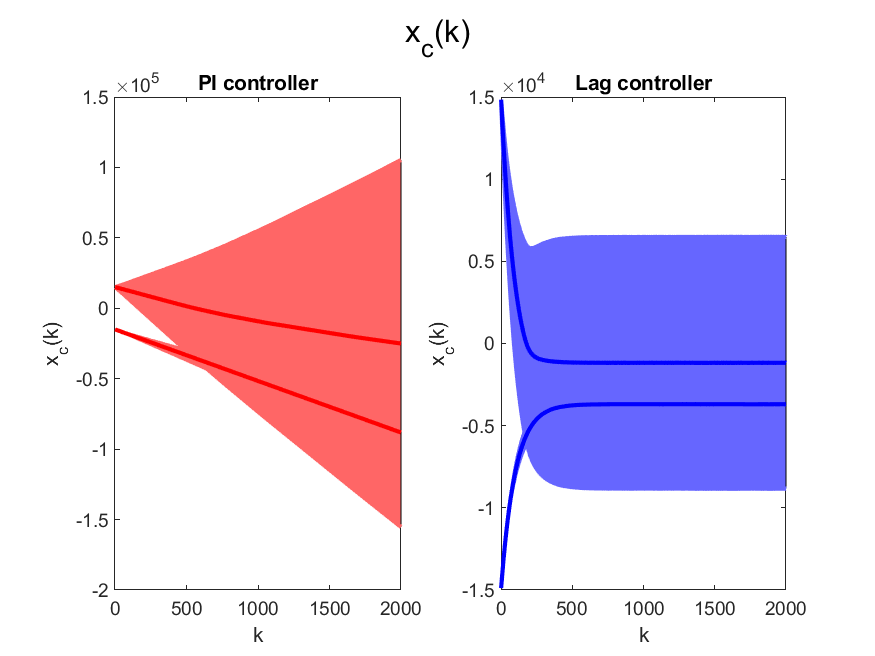}
\caption{Results of simulations on the serial test case regulated to 300 MW:
Control signal and the state of the controllers, as functions of time, for the two controllers and two initial states of each of the two controllers.}
\label{fig:linear02p2}
\end{figure}

\begin{figure*}[h]
\centering 
\includegraphics[width=0.45\textwidth]{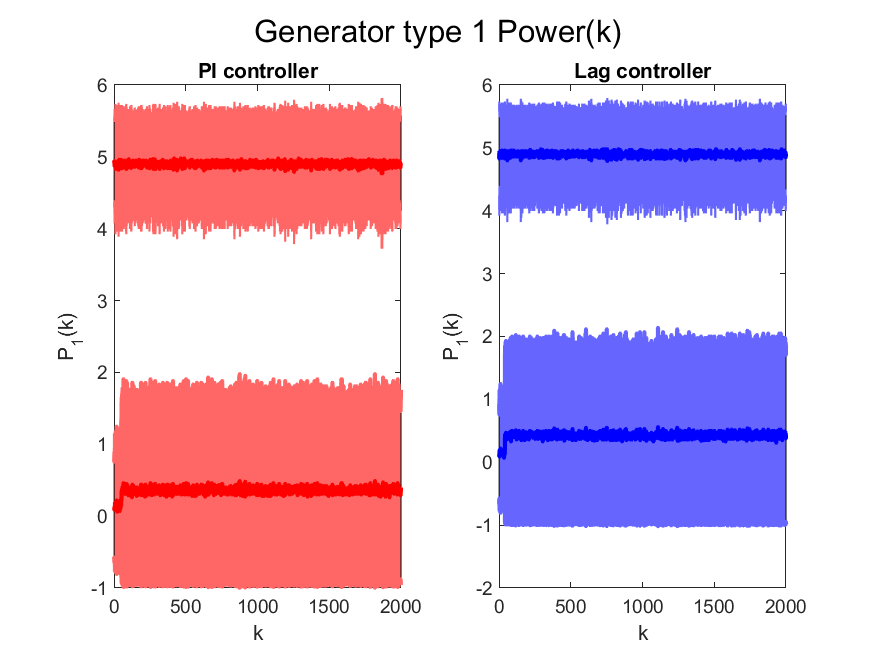}
\includegraphics[width=0.45\textwidth]{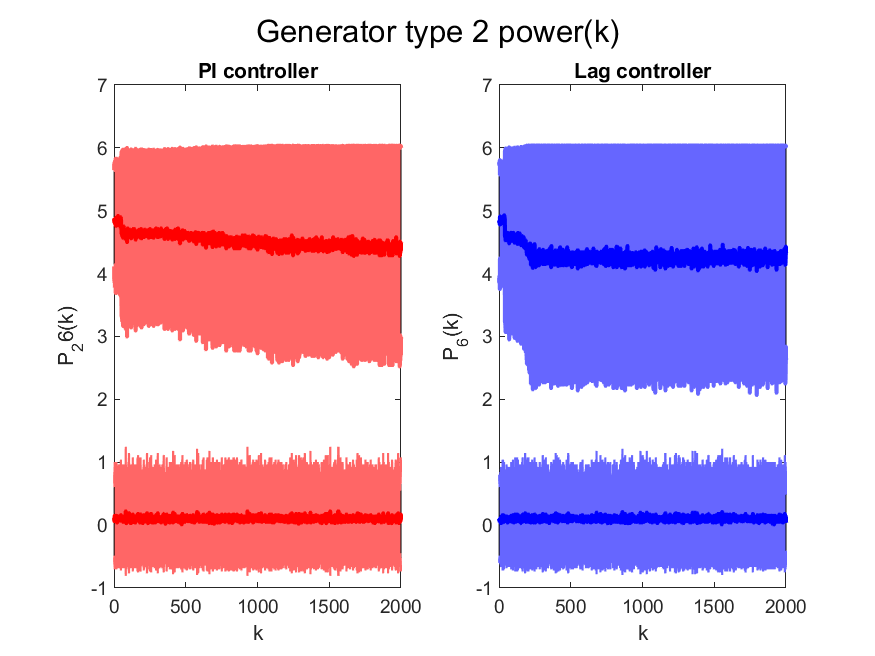}
\caption{Results of simulations on the serial test case regulated to 300 MW:
Powers at the first 60 DERs (type-1 probability functions) and following 60 DERs (type-2 probability functions).}
\label{fig:linear02p3}
\end{figure*}

In our second set of simulations on the serial test case, we repeat 500 times the simulations considering the time horizon of 2000 time steps each. Instead of $r=300$, $x_0 = 300$ of the first set of simulations, we consider
$r$ to be $300$ plus losses at time $k=0$,
and similarly $x_0$ to be $300$ plus losses at time $k=0$.
Otherwise, the system's behaviour is similar. 

As above, we are able to regulate the aggregate power output (cf. Figure ~\ref{fig:linear03p1}). With PI controller, however, the state of the controller, and consequently the signal and the power at individual buses are determined by the initial state of the controller (cf. Figures \ref{fig:linear03p2}--\ref{fig:linear03p3}).

\begin{figure}[h]
\centering 
\includegraphics[width=0.45\textwidth,trim=0 0 0 10mm,clip]{linear_02/01_Assembly_power.png}
\includegraphics[width=0.45\textwidth]{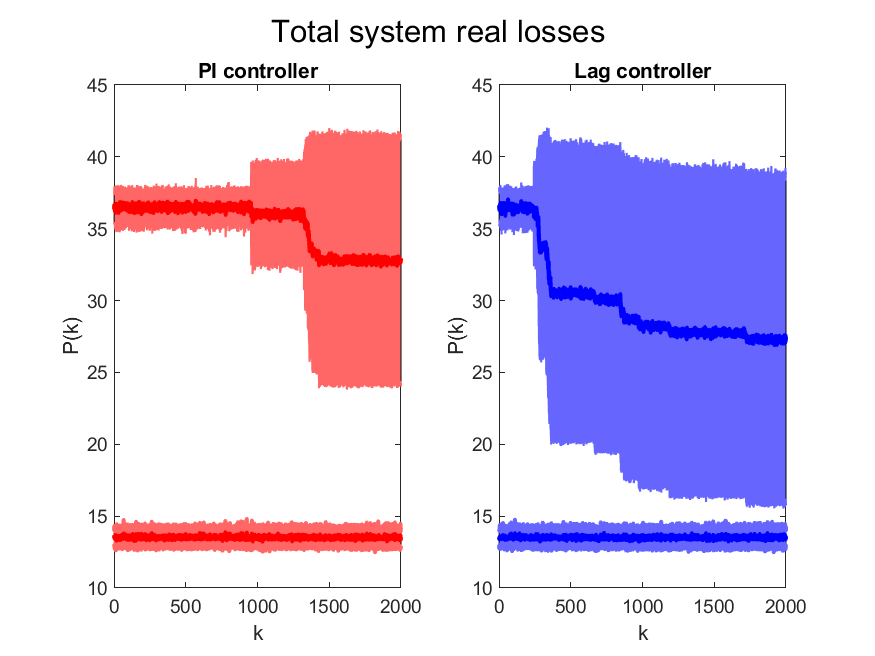}
\caption{Results of simulations on the serial test case regulated to 300 MW plus losses: 
Aggregate power produced by the ensemble (left) and the corresponding lossess in transmission, as functions of time for the two controllers and two initial states of each of the two controllers.}
\label{fig:linear03p1}
\end{figure}

\begin{figure}[h]
\centering 
\includegraphics[width=0.45\textwidth]{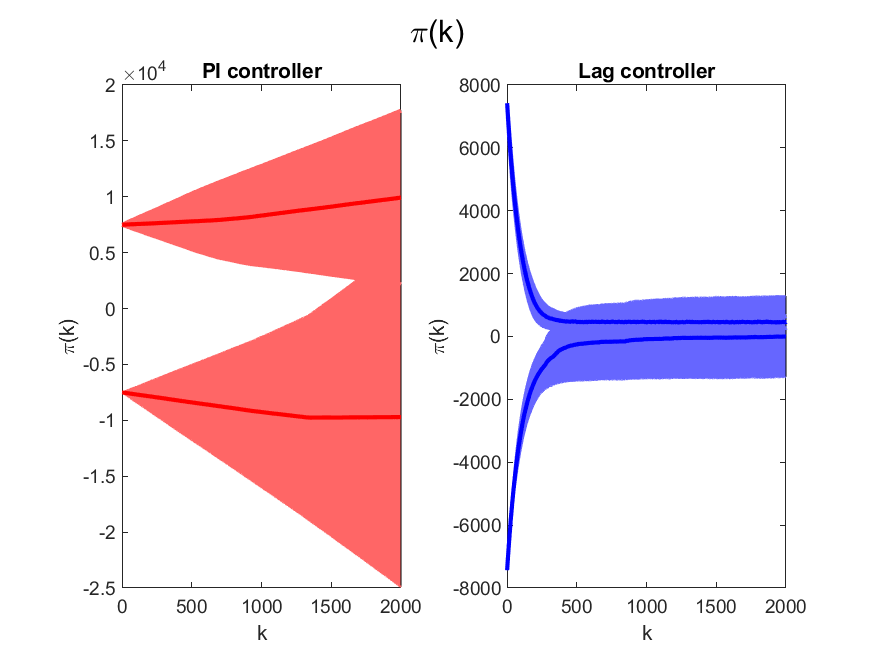}
\includegraphics[width=0.45\textwidth]{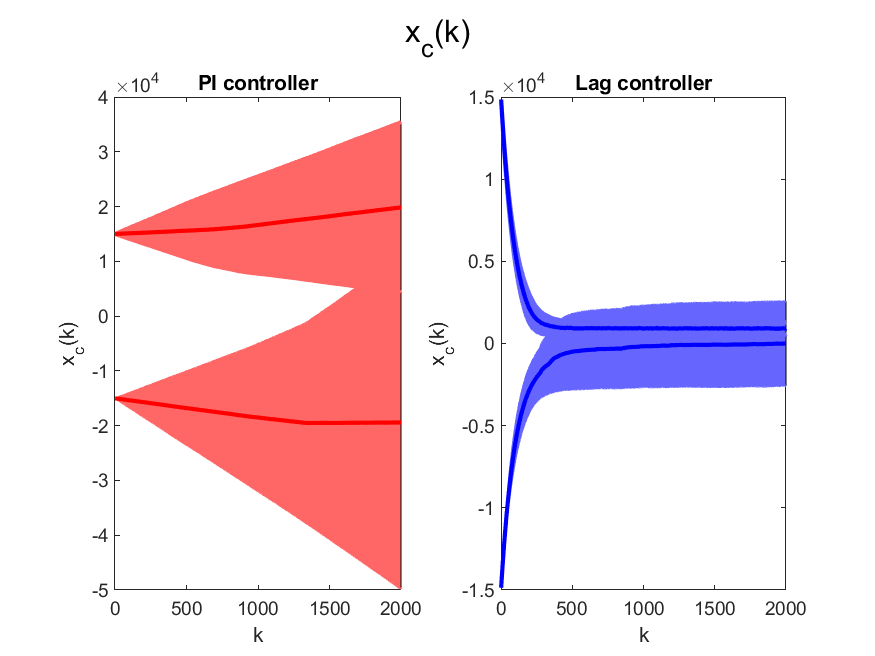}
\caption{Results of simulations on the serial test case regulated to 300 MW plus losses:
Control signal and the state of the controllers, as functions of time, for the two controllers and two initial states of each of the two controllers.}
\label{fig:linear03p2}
\end{figure}

\begin{figure*}[h]
\centering 
\includegraphics[width=0.45\textwidth]{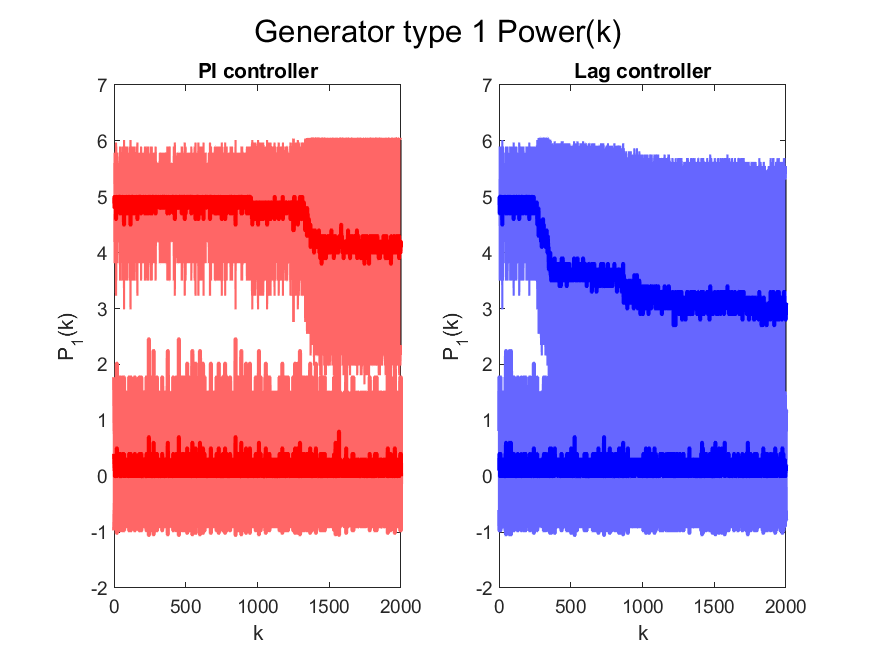}
\includegraphics[width=0.45\textwidth]{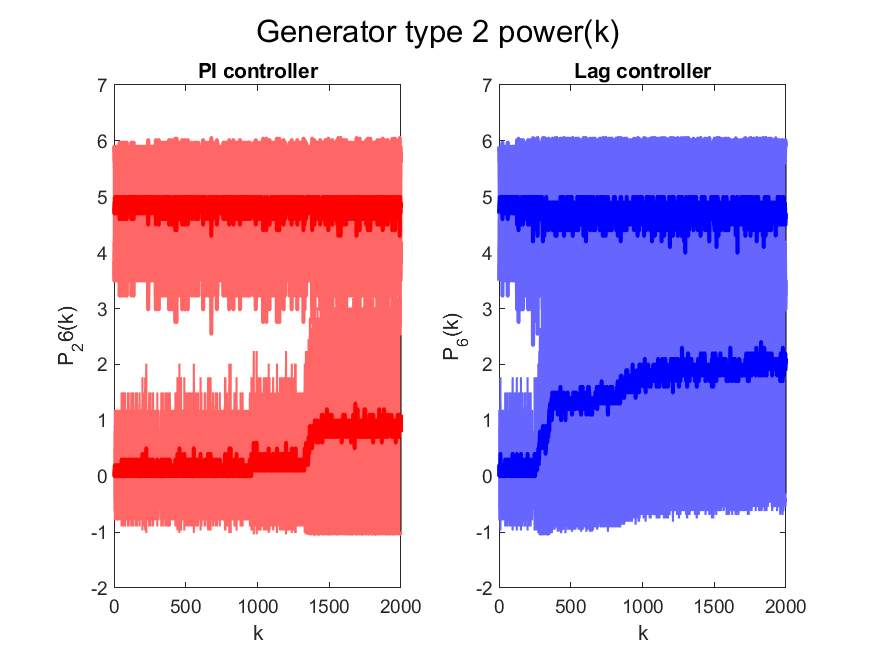}
\caption{Results of simulations on the serial test case regulated to 300 MW plus losses: 
Powers at the first 60 DERs (type-1 probability functions) and following 60 DERs (type-2 probability functions).}
\label{fig:linear03p3}
\end{figure*}

\FloatBarrier
\subsection{A Transmission-System Test Case with Losses in the Filter}

The third example demonstrates the procedure's applicability to the standard IEEE 118-bus test case with minor modifications. Notably, the ensemble is connected to two buses of the transmission system. At bus 10, 1 generator with a maximum active power output of 450 MW was replaced by 8 DERs with 110 MW of power each, out of which 4 DERs used probability function $g_{i1}$, where $x_{01}=660$.
The other four DERs used function $g_{i2}$, where $x_{02}=110$ and $\xi=100$. 
At bus 25, 1 generator with maximum active power output of 220 MW was replaced by four generators with active power output of 110 MW, out of which
two generators used probability function $f_{i1}$, and the other two used function $f_{i2}$ with $r=660$.
Bus 69 is the slack bus, similarly to the generator at bus 3 in the previous example. 
Figures \ref{fig:case118p1}--\ref{fig:case118p3} yield analogical results to the first example, for both controllers and different initial conditions $x_c (0)$.

\begin{figure}
\centering 
\includegraphics[width=0.45\textwidth]{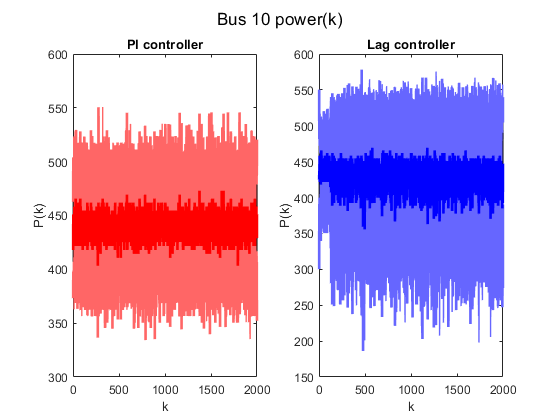}
\includegraphics[width=0.45\textwidth]{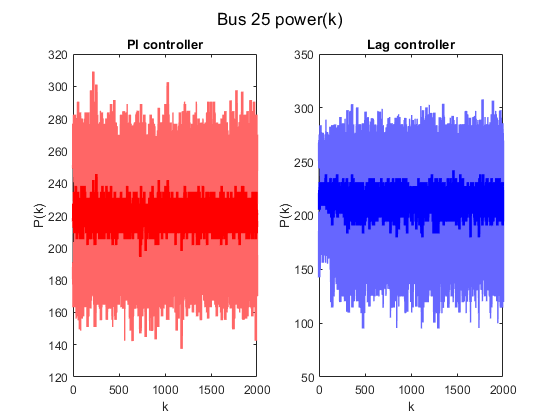}\\
\caption{Results of simulations on IEEE 118-bus test case: 
Powers at buses 10 and 25.}
\label{fig:case118p1}
\end{figure}

\begin{figure}
\centering 
\includegraphics[width=0.45\textwidth]{case118_pf_01/118_07_pi_signal.png}
\includegraphics[width=0.45\textwidth]{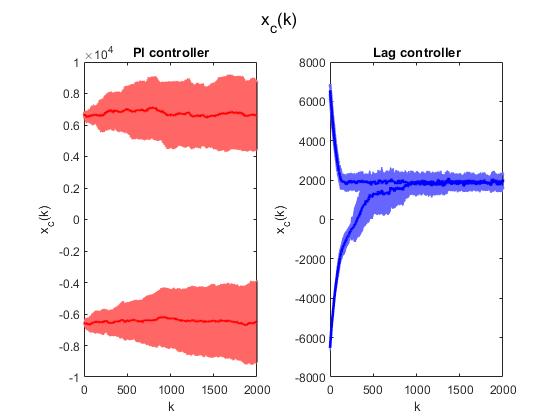}
\caption{Results of simulations on IEEE 118-bus test case: 
Control signals and the state of the controllers, as functions of time, for the two controllers and two initial states of each of those.}
\label{fig:case118p2}
\end{figure}

\begin{figure*}[h]
\centering 
\includegraphics[width=0.45\textwidth]{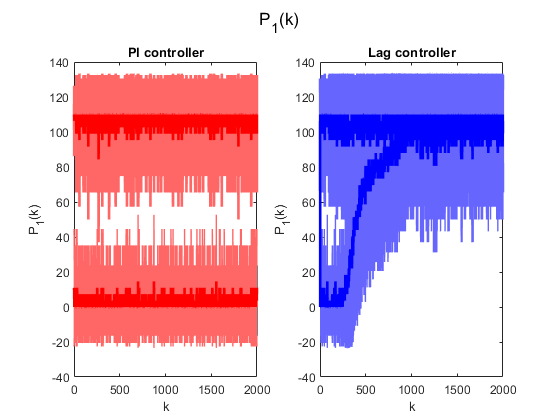}
\includegraphics[width=0.45\textwidth]{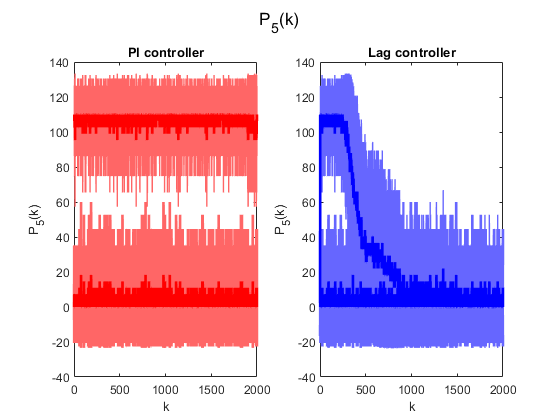}\\
\includegraphics[width=0.45\textwidth]{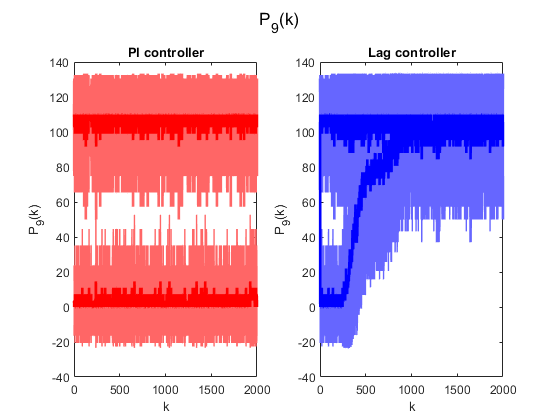}
\includegraphics[width=0.45\textwidth]{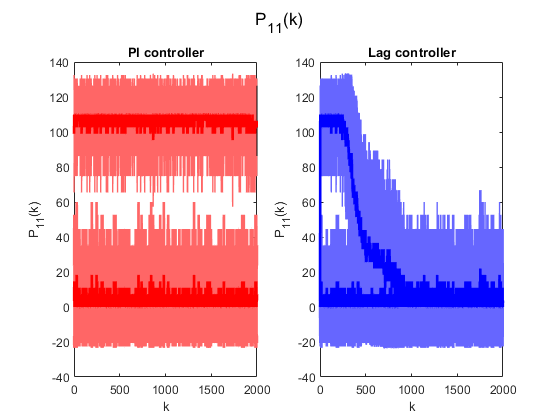}\\
\caption{Results of simulations on IEEE 118-bus test case: 
Powers at buses 1, 5, 9, and 11.}
\label{fig:case118p3}
\end{figure*}

\end{document}


the ``uniquely ergodicity'' in the language of \cite{ErgodicControlAutomatica}.
